\newtheorem{assumption}{Hypothesis}
\newtheorem{remark}{Remark}[section]
\newtheorem{definition}{Definition}[section]
\newtheorem{theorem}{Theorem}[section]
\newtheorem{lemma}[theorem]{Lemma}
\newtheorem{proposition}[theorem]{Proposition}
\newtheorem{corollary}[theorem]{Corollary}
\newtheorem{example}[theorem]{Example}
\def\ds{\displaystyle}
\def\bega{\begin{array}}
\def\enda{\end{array}}
\def\bepmatrix{\begin{pmatrix}}
\def\enpmatrix{\end{pmatrix}}
\def\bel{\begin{equation}\label}
\def\eeq{\end{equation}}
\newcommand\ee{\end{equation}}
\def\benl{\begin{equation*}}
\def\eenl{\end{equation*}}
\def\forall{\hbox{for all }~}
\def\be{\begin{equation}}
\def\beq{\begin{equation}}
\def\bel{\begin{equation}\label}
\def\eeq{\end{equation}}
\newcommand\ba{\begin{array}}
\newcommand\ea{\end{array}}
\def\begi{\begin{itemize}}
\def\endi{\end{itemize}}
\def\pr{\partial}
\def\forall{\hbox{for all}~}
\def\R{I\!\!R}
\def\R{I\!\!R}
\newcommand{\cR}{\mathbb{R}}
\newcommand{\cN}{\mathbb{N}}
\newcommand{\ddt}{\frac{\rm d}{{\rm d} t} }
\def\C{\mathcal{C}}
\def\F{\mathcal{F}}
\def\L{\mathcal{L}}
\def\R{\mathcal{R}}
\def\U{\mathcal{U}}
\def\ub{\bar{u}}
\def\xb{\bar{x}}
\def\zb{\bar{z}}
\def\xib{\bar{\xi}}
\def\zetab{\bar{\zeta}}
\def\uh{\hat{u}}
\def\zetab{\bar\zeta}
\def\bel{\begin{equation}\label}
\def\eeq{\end{equation}}
\def\ubf{{\bf{u}}}
\def\vbf{{\bf v}}
\def\ybf{{\bf y}}
\title{\LARGE \bf
On optimal control problems
with impulsive commutative dynamics
}
\author{M. Soledad Aronna$^*$, Franco Rampazzo$^\dagger$\thanks{
This article will appear in the Proceedings of the 52nd IEEE Conference on Decision and Control, 2013.
This work is supported by the European Union under the 7th Framework Programme FP7-PEOPLE-2010-ITN -  Grant agreement 264735-SADCO.}
\thanks{$^*$$^\dagger$ M.S. Aronna and F. Rampazzo are with the Dipartimento di Matematica, Universit\`a di Padova, Padova  35121, Italy
 {\tt\small aronna@math.unipd.it, rampazzo@math.unipd.it}}%
}
\begin{document}

\maketitle
\thispagestyle{empty}
\pagestyle{empty}

\begin{abstract}

We consider control systems governed by nonlinear O.D.E.'s that are  affine in the time-derivative du/dt of the control u.  The latter is allowed to be an integrable, possibly of unbounded variation function, which gives the system an impulsive character.  As is well-known, the corresponding Cauchy problem cannot   be interpreted in terms of Schwartz distributions, even in the commutative case. A robust notion of solution already proposed in the literature is here adopted and slightly generalized to the case where an ordinary, bounded, control is present in the dynamics as well.  For a  problem in the Mayer form we then investigate the question whether this notion of solution provides a ``proper extension" of the standard problem with absolutely continuous controls u. Furthermore, we show that this impulsive problem  is  a variational limit  of problems corresponding to controls u with bounded variation.

 \end{abstract}


\section{Introduction and basic notation}

Consider the control system
\begin{align}
\label{E}&\dot x = \tilde{f}(x,u,v) + \ds\sum_{\alpha=1}^m  \tilde g_\alpha(x,u)\dot u^\alpha, \\
\label{E0}&x(a)=\xb,\quad u(a)=\ub,
\end{align}
where $v:[a,b] \to V \subset \cR^l$ is a {standard bounded control} while  $u:[a,b] \to U \subseteq \cR^m$ is  an $\L^1-$function, which we refer to  as the {\em impulsive control.} The presence of the derivative $\dot u$ on the right hand-side  raises the issue of the definition of a (possibly discontinuous) solution  $x:[a,b] \to \cR^n.$
Several applications of this type of system are known, e.g. in mechanics, biology and economics. In optimal control theory impulses arise as soon as the control is unbounded and the cost lacks coercivity properties.
It is well-known that an approach based on Schwartz distributions cannot work (see e.g. \cite{Haj85}), this fact marking a crucial difference with the case when the vector fields $\tilde g_\alpha$ are constant.
However, an  appropriate  concept of solution for \eqref{E}-\eqref{E0} has been proposed in the late eighties (see e.g. \cite{Bre87,BreRam91,Dyk94}), also in connection with optimal control problems.
(Notice that  one cannot exploit  the notions of solutions  utilized for $u$  e.g. in \cite{BreRam88,Mil89,SilVin96}, for the controls are allowed to have {\it unbounded} variation).
In this paper we adopt and slightly extend the notion in \cite{BreRam91},   calling  it {\it pointwise defined solution} (shortly: {\it p.d. solution}\footnote{Actually, we call it {\em limit solution} in the subsequent articles (see \cite{AroRam13a,AroRam13b})}).
We begin by stating and partially proving elementary properties of p.d. solutions, like existence, uniqueness and continuous dependence on the data.  Afterwards, we focus on the question whether a Mayer type  optimal control  problem on the interval $[a,b],$
\be
\label{infL1}
\inf_{(u,v)\in \L^1\times L^1}\psi(x( b), u(b)),
\ee
 is in fact a {\it proper extension} of the standard problem
 \be
 \label{infAC}
 \inf_{(u,v)\in AC\times L^1}\psi(x( b), u(b)),
 \ee
where $\L^1$ and $L^1$ stand for the ``set of Lebesgue integrable functions"  (on [a,b])  and its quotient set, respectively; while $AC$ means ``absolutely continuous".

Loosely speaking, a {\it proper extension} of a minimum problem is a new problem in which the old one is embedded, in such a way  that the domain of the original problem is (somehow) dense in the new domain and the two problems have the same infimum value.

Our motivation  to study proper extensions of \eqref{infAC} comes mainly  from the need of  giving a physically acceptable meaning to  typical investigations for optimal control problems involving p.d. solutions. An instance is represented by necessary conditions for optimality. Indeed, in order that such necessary conditions are of practical use  one should rule out  the occurrence of  {\em Lavrentiev-like phenomena,} namely the fact that the infimum value of the extended problem is strictly less then that of the original system. Another instance that makes the search for proper extensions reasonable is dynamic programming and  its PDE expression, the Hamilton-Jacobi equation (see Section \ref{SecLast}).
Of course, the case where terminal constraints are imposed on the trajectories is of great  interest both for necessary conditions and dynamic programming. This case, which poses non-trivial additional difficulties, is investigated  in \cite{AroRam13c}.

The paper is organized as follows: in the remaining part of the present Section we introduce the notation and state some general  preliminary results. In Section \ref{SecCauchy} we present the definition and basic results on Cauchy problems involving p.d. solutions.
In Section \ref{SecProper} it is shown that the minimum problem with p.d. solutions is in fact a proper extension of the standard problem. In Section \ref{SecBV} we prove that the minimum problem with p.d. solutions is also the limit when $K\to+ \infty$ of the (impulse) problems corresponding  to $u$ with total variation bounded by $K$. In Section \ref{SecLast}, we propose some final considerations concerning existence of minima and the Hamilton-Jacobi equation for the problem \eqref{infL1}.

\subsection{Notation and preliminaries}

Let $[a,b]$ be a real interval and $E \subseteq \cR^d.$
$\L^1([a,b];E)$ will denote  the space of Lebesgue integrable functions defined on $[a,b]$ and having  values in $E.$  We shall use $L^1([a,b];E)$ to denote the corresponding set of equivalence classes, and $AC([a,b];E)$ to denote the space of absolutely continuous functions defined on $[a,b]$ with values in $E.$

Let us extend the functions $\tilde f,\tilde g_\alpha$, $\alpha =1,\dots,m$  to vector fields $f,g_\alpha$  on $\cR^{n+m}$ by setting
\bel{fieldsgf}
f:= f^j\frac{\partial}{\partial x^j},\qquad g_\alpha:= g_{\alpha}^j \frac{\partial}{\partial x^j} +     \frac{\partial}{\partial z^\alpha}\,,
\eeq
where $\left(\frac{\partial}{\partial x^1},\dots,\frac{\partial}{\partial x^n}, \frac{\partial}{\partial z^1},\dots, \frac{\partial}{\partial z^m}\right)$ is the canonical basis of $\cR^{n+m}$ and  we have adopted the {\em summation convention }over  repeated indexes. When not otherwise specified, Latin indexes run from $1$ to $n,$ while Greek indexes run from $1$ to $m.$

The hypothesis below is assumed throughout  the article.

\begin{assumption}[Commutativity]
\label{AssComm}
For every pair $\alpha,\beta,$
\bel{gicomm}
{[{g}_\alpha,{g}_\beta]=0,}
\ee
where $[{g}_\alpha,{g}_\beta] $ denotes the {\em Lie bracket}
of  $g_\alpha$ and $g_\beta,$ that in coordinates is defined by $[{g}_\alpha,{g}_\beta] := Dg_\beta\, g_\alpha - Dg_\alpha \, g_\beta.$
(Notice, in particular,  that the last $m$ components of $[{g}_\alpha,{g}_\beta]$ are zero.)
\end{assumption}

\begin{remark}
While this commutativity assumption is not a generic hypothesis, we impose it here motivated by the following reasons: 1) the scalar case and some mechanical applications are covered, and 2) we can ensure uniqueness of the solution of the impulsive Cauchy problem.
\end{remark}

Besides Hypothesis \ref{AssComm} we shall assume the following:

\begin{assumption}
\label{H1}
\begin{itemize}
 \item[(i)] $V \subseteq \cR^l$ is compact.
 \item[(ii)] For every $v \in V$, ${f}(\cdot,\cdot,v):\cR^{n+m}\to\cR^{n+m}$ is locally Lipschitz continuous and, for every $(x,u)\in\cR^{n+m}$ one has that ${f}(x,u,\cdot): V \to\cR^{n+m}$ is continuous.
 \item[(iii)] There exists $M>0$ such that $|f(x,u,v)| \leq  M(1+|(x,u)|),$ for every $(x,u) \in \cR^{n+m},$ uniformly in $v \in V.$
 \item[(iv)] The vector fields $g_\alpha:\cR^{n+m}\to\cR^{n+m}$ are of class $\C^1$ and there exists $N>0$ such that $|g_\alpha(x,u)| \leq  N(1+|(x,u)|),$ for every $(x,u) \in \cR^{n+m}.$
\end{itemize}
\end{assumption}

We observe that the sublinearity in (iii) and (iv) can be replaced by other conditions guaranteeing existence of the integral trajectories.

Let $h$ be a locally Lipschitz vector field on a $\C^1-$manifold $M$, and let $m\in M$. Whenever the solution to
\be
\ddt x(s) = h(x(s)),\quad h(0)=m
\ee
is defined on a interval $I$ containing $0$,   we use  ${\rm exp}({t h})(m)$ to denote the value of this  solution at time $t,$ for every $t\in I.$
We remark that the identification ${\rm exp}({h})={\rm exp}({1h})$
is consistent with this definition.

\section{The Cauchy problem}\label{SecCauchy}
Let us introduce a change of coordinates $\phi$ in the space $\cR^{n+m}$ that -thanks to Hypothesis \ref{AssComm}- simultaneously  transforms the vector fields $g_\alpha$ into constant vector fields.

 \subsection{A crucial change of coordinates}

Let  $\mathrm{Pr}:\cR^{n} \times \cR^m\rightarrow \cR^n$ denote the {canonical projection} on the first factor,
$\mathrm{Pr}(x,z) := x,$
and let the function  $\varphi:\cR^{n+m}\to \cR^n$ be defined by
\benl
\varphi(x,z) :=
\mathrm{Pr} \Big( \exp \left({- z_{\alpha} g_{\alpha}}\Big)
(x,z)\right).
\eenl
 Let us consider the map $\phi:\cR^{n+m}\rightarrow \cR^{n+m}$ defined by
\benl
\phi(x,z) := (\varphi(x,z),z).
\eenl
It  is straightforward to prove the following result:

\begin{lemma}
Assume that the vector fields $g_1,\dots,g_m$ are of class $\C^r,$ with $r\geq 1.$ Then $\phi$ is a $\C^r$-diffeomorphism of $\cR^{n+m}$ onto itself and, for every $(\xi,\zeta)\in \cR^{n+m},$ one has
\bel{phiinv}
\phi^{-1}(\xi,\zeta) = (\varphi(\xi,-\zeta), \zeta).
\eeq
\end{lemma}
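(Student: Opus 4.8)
The plan is to exploit that $\phi$ preserves the $z$-coordinate and, for each frozen value of $z$, acts on the first factor through the time-one flow of a \emph{single} vector field; note that for this lemma only the $\C^r$ regularity of the $g_\alpha$ is used, not the commutativity of Hypothesis \ref{AssComm}.

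First I would record the regularity of $\phi$. For fixed $z=(z_1,\dots,z_m)$ the combination $w_z:=-z_\alpha g_\alpha$ is a single $\C^r$ field on $\cR^{n+m}$, complete by the sublinear bound of Hypothesis \ref{H1}(iv), so its time-one flow $\exp(w_z)$ is globally defined. The map $(x,z)\mapsto\exp(-z_\alpha g_\alpha)(x,z)$ is then $\C^r$ jointly in $(x,z)$ by the standard theorem on $\C^r$-dependence of ODE solutions on the initial datum and on parameters, $z$ entering both as the initial point and, through the coefficients $-z_\alpha$, as a parameter of the field. Composing with the linear projection $\mathrm{Pr}$ and appending the identity in the $z$ variable gives $\phi\in\C^r$.

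Next I would establish bijectivity together with the closed form of the inverse. Since the $z^\alpha$-component of $g_\alpha$ is $\partial/\partial z^\alpha$, the field $w_z$ has constant $z$-components equal to $-z$, so $\exp(w_z)$ shifts the $z$-coordinate by $-z$ and hence carries the slice $\cR^n\times\{z\}$ onto $\cR^n\times\{0\}$, with $\exp(w_z)(x,z)=(\varphi(x,z),0)$. Being the time-one flow of a single field, $\exp(w_z)$ is a $\C^r$-diffeomorphism whose inverse is the reverse-time flow $\exp(-w_z)=\exp(z_\alpha g_\alpha)$; therefore each partial map $x\mapsto\varphi(x,z)$ is a $\C^r$-diffeomorphism of $\cR^n$ and $\phi$ is a $\C^r$-diffeomorphism of $\cR^{n+m}$. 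To invert $\phi$ at $(\xi,\zeta)$ I put $z=\zeta$ and solve $\varphi(x,\zeta)=\xi$: from $\exp(-\zeta_\alpha g_\alpha)(x,\zeta)=(\xi,0)$ and the inverse-flow relation one gets $(x,\zeta)=\exp(\zeta_\alpha g_\alpha)(\xi,0)$, that is
\[
\phi^{-1}(\xi,\zeta)=\Big(\mathrm{Pr}\,\exp(\zeta_\alpha g_\alpha)(\xi,0),\ \zeta\Big).
\]

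The delicate step, which I expect to be the main obstacle, is to match this expression with the displayed form \eqref{phiinv}: equivalently, to check the cancellation $\varphi(\varphi(x,z),-z)=x$ (and its companion $\varphi(\varphi(\xi,-\zeta),\zeta)=\xi$), i.e. that the two partial maps $y\mapsto\varphi(y,\zeta)$ and $y\mapsto\varphi(y,-\zeta)$ are mutually inverse on $\cR^n$. Concretely this asks that flowing the field $\zeta_\alpha g_\alpha$ for unit time yields the same first component whether one starts from $(\xi,0)$ or from $(\xi,-\zeta)$. The mechanism is transparent when the components $g_\alpha^j$ do not depend on $z$, since then the $x$-flow is slice-independent; in the general $z$-dependent situation it forces one to keep careful track of the slice on which each flow lands, and this bookkeeping --- rather than the diffeomorphism property, which is routine --- is where I would concentrate the verification.
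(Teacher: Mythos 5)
Your preliminary steps are correct and, since the paper declares this lemma ``straightforward'' and supplies no proof, they are really all one has to go on: the $\C^r$ regularity of $\phi$ via smooth dependence of flows on data and parameters, the observation that $\exp(-z_\alpha g_\alpha)$ carries the slice $\cR^n\times\{z\}$ onto $\cR^n\times\{0\}$, the resulting bijectivity of $\phi$, and the exact inversion formula
\[
\phi^{-1}(\xi,\zeta)=\Big(\mathrm{Pr}\,\exp(\zeta_\alpha g_\alpha)(\xi,0),\ \zeta\Big)
\]
are all right. But the step you defer --- identifying this with $(\varphi(\xi,-\zeta),\zeta)$, i.e.\ proving $\mathrm{Pr}\,\exp(\zeta_\alpha g_\alpha)(\xi,0)=\mathrm{Pr}\,\exp(\zeta_\alpha g_\alpha)(\xi,-\zeta)$ --- is the entire content of \eqref{phiinv}, and announcing that this is ``where I would concentrate the verification'' is not the same as verifying it. As written, the proposal proves the diffeomorphism claim but not the displayed inverse formula.

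Moreover, your suspicion about the $z$-dependent case is justified: that remaining identity is false in general, so no bookkeeping will close the gap. Take $n=m=1$ and $g=z\,\partial/\partial x+\partial/\partial z$, which is $\C^\infty$, sublinear, and trivially satisfies Hypothesis \ref{AssComm}. Integrating $\dot X=-zZ$, $\dot Z=-z$ from $(x,z)$ gives $\varphi(x,z)=x-z^2/2$, hence $\phi(x,z)=(x-z^2/2,\,z)$ and $\phi^{-1}(\xi,\zeta)=(\xi+\zeta^2/2,\,\zeta)$, while $\varphi(\xi,-\zeta)=\xi-\zeta^2/2$; indeed $\phi\big(\varphi(\xi,-\zeta),\zeta\big)=(\xi-\zeta^2,\zeta)\neq(\xi,\zeta)$. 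So \eqref{phiinv} holds exactly in the situation you call transparent, namely when the components $g_\alpha^j$ do not depend on $z$ (the setting of \cite{BreRam91}, where $\tilde g_\alpha$ is independent of $u$, from which the formula appears to have been carried over), and fails once $\tilde g_\alpha$ genuinely depends on $u$. The honest output of your argument is the flow formula for $\phi^{-1}$ displayed above; it agrees with $(\varphi(\xi,-\zeta),\zeta)$ precisely when the first $n$ components of $\exp(\zeta_\alpha g_\alpha)$ are insensitive to the initial $z$-slice. You should either state the inverse in that flow form, or flag that \eqref{phiinv} (and the representation $x(\xb,u,v)(t)=\varphi(\xi(t),-u(t))$ in Theorem \ref{ex-un}, which relies on it) needs the additional hypothesis $g_\alpha^j=g_\alpha^j(x)$.
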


The $\C^r$-diffeomorphism  $\phi$  induces a $\C^{r-1}$-diffeomorphism $D\phi$ on the tangent bundle. For each $\alpha=1,\hdots,m,$ let us set
\be\begin{array}{l}
F(\xi,\zeta,v):= D\phi (x,z) \, f(x,z,v),
\\
G_\alpha(\xi,\zeta):= D\phi (x,z) \, g_{\alpha}(x,z).
\end{array}
\ee

\begin{lemma}
\label{flowbox}
For every $i=1,\dots, n,$ $\alpha=1,\dots, m,$
\be
F =\left(\frac{\pr \varphi^i}{\pr x^j} \tilde{f}^j \right) \frac{\pr}{\pr x^i},\quad {G}_{\alpha}
= \frac{\pr }{\pr z^\alpha},
\ee
where we have set $\varphi=(\varphi^1,\dots,\varphi^n).$
\end{lemma}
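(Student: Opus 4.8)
The plan is to exploit the block-triangular structure of $D\phi$ arising from $\phi(x,z)=(\varphi(x,z),z)$, together with the single crucial identity $D\varphi(x,z)\,g_\alpha(x,z)=0$, which is the only place where Hypothesis \ref{AssComm} enters. In the $(x,z)$-splitting the differential reads
\[
D\phi=\begin{pmatrix}\dfrac{\pr\varphi}{\pr x} & \dfrac{\pr\varphi}{\pr z}\\[4pt] 0 & I_m\end{pmatrix},
\]
and, recalling from \eqref{fieldsgf} that $f$ has vanishing $z$-components while $g_\alpha$ has $z$-component equal to the coordinate vector $e_\alpha$, both assertions reduce to short matrix computations once the crucial identity is available.

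First I would dispose of $F$. Since the last $m$ components of $f$ vanish, multiplying the block matrix above by $f$ kills the columns $\pr\varphi/\pr z$ and leaves $\frac{\pr\varphi^i}{\pr x^j}\tilde f^j$ in the $x$-slots and $0$ in the $z$-slots, which is exactly the claimed formula for $F$. No commutativity is needed here.

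The heart of the matter, and the main obstacle, is to show $D\varphi\,g_\alpha=0$, i.e.\ that $\varphi$ is constant along the flow of each $g_\alpha$. I would argue directly. Fixing $(x,z)$ and flowing along $g_\alpha$, the fact that the $z$-component of $g_\alpha$ is $e_\alpha$ gives $\exp(t g_\alpha)(x,z)=(\tilde x(t),\,z+te_\alpha)$ for a suitable curve $\tilde x(t)$. Substituting into the definition of $\varphi$, the coefficient vector in the defining exponential becomes $\sum_\beta(z+te_\alpha)^\beta g_\beta=\sum_\beta z^\beta g_\beta+t\,g_\alpha$, so that
\[
\varphi\big(\exp(t g_\alpha)(x,z)\big)=\mathrm{Pr}\Big(\exp\big(-\textstyle\sum_\beta z^\beta g_\beta-t\,g_\alpha\big)\,\exp(t g_\alpha)(x,z)\Big).
\]
At this point Hypothesis \ref{AssComm} is invoked: since $g_1,\dots,g_m$ pairwise commute, their flows commute and the time-one flow of a sum factors as the composition of the individual time-one flows, whence $\exp(-\sum_\beta z^\beta g_\beta-t\,g_\alpha)=\exp(-\sum_\beta z^\beta g_\beta)\circ\exp(-t\,g_\alpha)$. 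The factors $\exp(\mp t\,g_\alpha)$ cancel, leaving $\varphi(\exp(t g_\alpha)(x,z))=\varphi(x,z)$ for every $t$; differentiating at $t=0$ gives $D\varphi\,g_\alpha=0$.

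With this identity the computation of $G_\alpha=D\phi\,g_\alpha$ closes immediately: its $i$-th $x$-component is $\frac{\pr\varphi^i}{\pr x^j}g_\alpha^j+\frac{\pr\varphi^i}{\pr z^\alpha}=(D\varphi\,g_\alpha)^i=0$, while the bottom block $(0,I_m)$ applied to $g_\alpha$ returns its $z$-part $e_\alpha$, so $G_\alpha=\pr/\pr z^\alpha$. The delicate point throughout is the dual role of $z$, which serves both as the base point and as the vector of coefficients defining $\varphi$; one must track the shift $z\mapsto z+te_\alpha$ in both roles, and the factorization of the exponential of a sum of commuting fields must be justified via uniqueness of integral curves, using the completeness secured by the sublinear growth in Hypothesis \ref{H1}.
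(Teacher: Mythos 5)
Your argument is correct. Note, however, that the paper does not prove Lemma \ref{flowbox} by computation at all: it simply remarks that the statement is a direct consequence of the \emph{Simultaneous Flow-Box Theorem} and refers to \cite[Lemma 2.1]{BreRam91} and \cite{Lang}. What you have written is, in effect, a self-contained unpacking of that citation for this particular straightening map $\phi$: the block-triangular form of $D\phi$ and the vanishing of the $z$-components of $f$ give the formula for $F$ with no commutativity at all, and the whole content of the flow-box statement is concentrated in the single identity $D\varphi\, g_\alpha=0$, which you obtain by showing $\varphi$ is a first integral of each $g_\alpha$ via the factorization $\exp(-z_\beta g_\beta - t g_\alpha)=\exp(-z_\beta g_\beta)\circ\exp(-t g_\alpha)$ for commuting (complete) fields. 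This is essentially the standard proof of the simultaneous flow-box theorem specialized to the explicit $\varphi$ of Section \ref{SecCauchy}, so mathematically the two routes coincide; what your version buys is transparency about exactly where Hypothesis \ref{AssComm} is used (only in $D\varphi\,g_\alpha=0$) and about the dual role of $z$ as base point and as coefficient vector, at the cost of having to justify completeness of the flows (which indeed follows from the sublinear growth in Hypothesis \ref{H1}(iv), as you note) and the commuting-flows factorization. Both points are handled adequately, so the proposal stands as a valid, more explicit substitute for the paper's citation.
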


\begin{remark}
The proof of Lemma \ref{flowbox} (see \cite[Lemma 2.1]{BreRam91} for details) is in fact a direct consequence of the  {\em Simultaneous Flow-Box Theorem}  (see e.g.  \cite{Lang}).
\end{remark}

Notice that the last $m$ components of $F$ are zero.  Therefore, in the new coordinates $(\xi,\zeta),$ the control system \eqref{E} turns into  the simpler form
\be
\label{xieq}
 \dot\xi(t) = \tilde{ F}(\xi(t),u(t),v(t)).
\ee
From now on we assume that the data are such that the Cauchy problem for \eqref{xieq} has a unique solution defined on $[a,b],$ for each $u\in AC([a,b];\cR^m),$ $v\in L^1([a,b];V).$ For instance, one can verify that this property holds true as soon as condition (iv) in Hypothesis \ref{H1} is replaced by (iv') below, which implies that $D \phi$ is globally bounded,
\vspace{1pt}
\begin{itemize}
\item[(iv')] $g_\alpha$ and $\C^1$ are globally Lipschitz.
\end{itemize}

\vspace{1pt}

Lemma \ref{equivC1} below concerns relations between the solutions of the control systems in both systems of coordinates.

Since we are going to exploit the diffeomorphism $\phi:\cR^{n+m} \to \cR^{n+m} $ it is convenient to embed \eqref{E}-\eqref{E0} in the $n+m$-dimensional Cauchy problem
\bel{AS}\left\{\begin{array}{l}
\vspace{1pt}
\begin{pmatrix}
\dot{x}\\
\dot{z}
\end{pmatrix}
= f(x,z,v)+   {g}_\alpha(x,z) \dot  u^\alpha,\\
\begin{pmatrix}
{x}\\{z}
\end{pmatrix}(a)= \begin{pmatrix} \xb \\ \zb \end{pmatrix}.\end{array}\right.
\ee
 Recall that the vector fields $f$ and $g_\alpha,$ are defined in $\cR^{n+m}\times V$ and $\cR^{n+m},$ respectively.
When $u\in AC([a,b];\cR^m),$ for every $(\xb,\zb) \in \cR^{m+n}$ and $v \in L^1([a,b];V),$  there exists a unique solution to \eqref{AS} in the interval $[a,b].$ We let  $(x,z)(\xb,\zb,u,v)(\cdot)$ denote this solution.

We shall also consider  the  Cauchy problem
\bel{TS}\left\{\begin{array}{l}
\vspace{1pt}
\begin{pmatrix}
\dot{\xi}\\
\dot{\zeta}
\end{pmatrix}
= F(\xi,\zeta,v)+   G_\alpha  \dot u^\alpha,\\
\begin{pmatrix}
{\xi}\\{\zeta}
\end{pmatrix}(a)= \begin{pmatrix} \xib \\ \zetab \end{pmatrix}
.\end{array}\right.
\ee
When $u\in AC([a,b];\cR^m),$ there exists a unique solution to \eqref{TS} in $[a,b].$ We let  $(\xi,\zeta)(\xib,\zetab,u,v)(\cdot)$ denote this solution.

The essential difference between the two systems relies on the fact that the {\em vector fields  $G_\alpha$ are constant}.  This allows us to give a notion of solution for  \eqref{TS} also for merely  integrable controls $u.$ Indeed, it is natural to set
\benl
\zeta(t) := \zetab +u(t) - u(a),
\eenl
 for all $t \in [a,b]$  and to let $\xi$ be  the Carath\'eodory  solution of the Cauchy problem $\dot\xi = F(\xi,\zeta,v),\,\,\xi(a) = \xib.$

When $u\in AC([a,b];\cR^m)$ the relation between the two systems is described in Lemma \ref{equivC1} below.
Let $(\xi,\zeta)(\xib,\zetab,u,v)(\cdot)$ denote  the unique solution of \eqref{TS} associated with $(\xib,\zetab)\in \cR^{n+m}$ and $(u,v) \in AC([a,b];\cR^m) \times L^1([a,b];V).$

\begin{lemma}
Let us consider $(\xb,\zb)\in \cR^{n+m}$ and controls
\label{equivC1}$u\in AC([a,b];\cR^m),$ $v \in L^1([a,b];V).$
Then,
\bel{xieta}
(\xi,\zeta)(\xib,\zetab,u,v)(t) = \phi\Big((x,z)(\xb,\zb,u,v)(t)\Big), \ee
\forall $t\in [a,b],$ where $(\xib,\zetab) := \phi(\xb,\zb).$
\end{lemma}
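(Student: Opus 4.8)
The plan is to reduce \eqref{xieta} to the uniqueness of solutions of \eqref{TS}: I push the solution of \eqref{AS} forward through $\phi$ and check that the resulting curve solves \eqref{TS} with the prescribed datum. Concretely, write $(x,z)(\cdot)$ for the unique solution $(x,z)(\xb,\zb,u,v)(\cdot)$ of \eqref{AS}, and define the candidate
\be
(\hat\xi,\hat\zeta)(t) := \phi\big((x,z)(t)\big), \qquad t\in[a,b].
\ee
Since $u\in AC([a,b];\cR^m)$ and $v\in L^1([a,b];V)$, the curve $t\mapsto (x,z)(t)$ is absolutely continuous; and because $\phi$ is a $\C^1$-diffeomorphism of $\cR^{n+m}$, the composition $(\hat\xi,\hat\zeta)=\phi\circ(x,z)$ is again absolutely continuous. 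It therefore suffices to show that $(\hat\xi,\hat\zeta)$ obeys both the dynamics and the initial condition of \eqref{TS}.

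First I would differentiate and substitute. Applying the chain rule for the composition of the $\C^1$ map $\phi$ with an absolutely continuous curve, and then using \eqref{AS}, one obtains for a.e.\ $t\in[a,b]$
\be
\ddt(\hat\xi,\hat\zeta)(t) = D\phi\big((x,z)(t)\big)\,\Big[\,f\big((x,z)(t),v(t)\big) + g_\alpha\big((x,z)(t)\big)\,\dot u^\alpha(t)\,\Big].
\ee
By the linearity of $D\phi$ this splits into a drift term $D\phi\,f$ and the terms $(D\phi\,g_\alpha)\,\dot u^\alpha$. I would then recognize these through the very definitions of $F$ and $G_\alpha$ as the $\phi$-pushforwards of $f$ and $g_\alpha$: since $(x,z)(t)=\phi^{-1}\big((\hat\xi,\hat\zeta)(t)\big)$, one has $D\phi\,f=F((\hat\xi,\hat\zeta)(t),v(t))$ and $D\phi\,g_\alpha=G_\alpha((\hat\xi,\hat\zeta)(t))$, so that
\be
\ddt(\hat\xi,\hat\zeta)(t) = F\big((\hat\xi,\hat\zeta)(t),v(t)\big) + G_\alpha\big((\hat\xi,\hat\zeta)(t)\big)\,\dot u^\alpha(t)
\ee
for a.e.\ $t$. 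The initial condition matches as well, since $(\hat\xi,\hat\zeta)(a)=\phi(\xb,\zb)=(\xib,\zetab)$. Hence $(\hat\xi,\hat\zeta)$ solves \eqref{TS}, and by the assumed uniqueness of its solutions for $AC$ controls it coincides with $(\xi,\zeta)(\xib,\zetab,u,v)(\cdot)$, which is precisely \eqref{xieta}.

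Conceptually the argument is nothing more than the invariance of integral curves under a diffeomorphism (the pushforward of a vector field along $\phi$). It is worth stressing that it rests only on $\phi$ being a $\C^1$-diffeomorphism and on the definitions of $F$ and $G_\alpha$; in particular, neither the constancy of the $G_\alpha$ (Lemma \ref{flowbox}) nor the commutativity Hypothesis \ref{AssComm} behind it is used here. The single point demanding care --- and the only genuine obstacle --- is the measure-theoretic justification of the chain rule in this non-smooth setting: one must verify that $\phi\circ(x,z)$ is absolutely continuous and that its derivative is given, for a.e.\ $t$, by $D\phi\,\ddt(x,z)$. This is standard for a $\C^1$ map composed with an $AC$ curve (on the compact image of the curve $\phi$ is Lipschitz), so the displayed identities hold for almost every $t$, and the Carath\'eodory form of \eqref{TS} then follows upon integration.
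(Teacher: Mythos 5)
Your argument is correct and is precisely what the paper has in mind: the paper dispenses with the proof by noting that the identity ``is a straightforward consequence of the definition of $F$ and $G_\alpha$,'' i.e.\ exactly your pushforward-plus-uniqueness argument. Your added care about the chain rule for a $\C^1$ map composed with an $AC$ curve, and your observation that neither Lemma \ref{flowbox} nor Hypothesis \ref{AssComm} is needed here, are both accurate.
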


The latter result is a straightforward consequence of the definition of $F$ and $G_\alpha.$

\subsection {Ponitwise defined solutions}

Throughout the paper we shall assume that $U$ is an {\it impulse domain}:
\begin{definition}
Let $U\subseteq\cR^m.$ $U$ is called an {\em impulse domain} if, for every bounded interval $I \subset \cR,$ for each  function $u\in\L^1(I;U)$ and for every $t \in I,$ there exists a sequence $\{u_k\} \subset AC(I;U)$ such that $\|u_k-u\|_1\to 0$ and $u_k(t) \to u(t),$ when $n \to \infty.$
\end{definition}

Examples of impulse domains are:
\begin{itemize}
\item $U=\bar \Omega,$ with $\Omega$ a bounded, open, connected subset with Lipschitz boundary;
\item an embedded differentiable submanifold of $\cR^m$.
 \item a convex subset $U\subseteq\cR^m$ .
 \end{itemize}

\begin{definition}
Consider an initial data $\xb \in \cR^n$ and let  $(u,v)\in\L^1([a,b];U)\times L^1([a,b];V).$
We say that a map $x:[a,b]\to \cR^{n}$ is an {\em pointwise defined solution} (shortly  {\em p.d. solution}) of the Cauchy problem \eqref{E}-\eqref{E0} if, for every $t\in [a,b],$ the following conditions are met:
\begin{itemize}
\item[(i)] there exists a sequence $\{ u_k \} \subset AC([a,b];U)$ such that
$u_k\to u$ in $L^1([a,b];U),$ $u_k(a) \to u(a),$ $u_k(t) \to u(t),$ when  $k \to \infty,$ and  ;
\item[(ii)] for each $k \in \cN,$ there exists a (Carath\'eodory) solution
$x_k:[a,b]\to\cR^{n}$ of  \eqref{E}-\eqref{E0} corresponding to the control $(u_k,v)$ and the initial condition $(\xb,\ub := u(a));$
\item[(iii)] the sequence $\{ x_k \}$  has uniformly bounded values and converges to $x$  in $L^1([a,b];\cR^{n})$ and, moreover, $\ds \lim_{k \to \infty} x_k(t) = x(t).$
\end{itemize}
\end{definition}

 \begin{remark}
When $u$ is absolutely continuous, the  notion of e.d solution  is equivalent to  the standard concept of Carath\'eodory solution. Moreover, in \cite{AroRam13a}, we show that the notion of p.d. solution is quite general and when controls $u\in BV$ it coincides with the most  known concepts of solution, even in the generic case when the Lie brackets do not vanish.
\end{remark}

\begin{theorem}[Existence, uniqueness, representation]
\label{ex-un}
For every $\xb \in \cR^{n},$ and every control pair $(u,v)\in\L^1([a,b];U)\times L^1([a,b];V),$ there exists a unique p.d. solution of the Cauchy problem \eqref{E}-\eqref{E0} defined on $[a,b],$ where we have set $\ub := u(a).$ We shall use $x(\xb,u,v)(\cdot)$ to denote this solution.
Moreover, setting $\xib := \varphi(\xb,u(a)),$ one has
\be
x(\xb,u,v)(t) = \varphi(\xi(t),-u(t)),
\ee
for all $t\in [a,b],$ where $\xi(\cdot):=\xi(\xib,u,v)(\cdot)$ is the Carath\'eodory solution of the Cauchy problem
\bel{xiCO}
\dot\xi=F(\xi,u,v),\quad\xi(a) = \xib.
\ee
\end{theorem}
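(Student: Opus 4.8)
The plan is to reduce the impulsive Cauchy problem to a classical Carath\'eodory one through the diffeomorphism $\phi$, and then to read off existence, uniqueness and the representation formula from the good behaviour of the transformed system. Embedding \eqref{E}--\eqref{E0} into the $(n+m)$-dimensional system \eqref{AS} with $\zb:=\ub=u(a)$, I would use Lemma \ref{flowbox} to observe that the transformed fields $G_\alpha=\partial/\partial z^\alpha$ are \emph{constant}: consequently the $\zeta$-component of \eqref{TS} integrates explicitly to $\zeta(t)=\zetab+u(t)-u(a)=u(t)$, while the $\xi$-component solves the \emph{non-impulsive} equation \eqref{xiCO}, $\dot\xi=\tilde F(\xi,u,v)$ with $\xi(a)=\xib=\varphi(\xb,u(a))$. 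Since $u$ now enters $\tilde F$ only as an argument and no longer through $\dot u$, this is a genuine Carath\'eodory problem that makes sense for $u\in\L^1$; under the standing assumptions, in particular (iv') which renders $D\phi$ globally bounded and $\tilde F$ globally Lipschitz in $\xi$ with the requisite measurability and growth in $(u,v)$, standard O.D.E.\ theory yields a unique solution $\xi(\cdot)$ on $[a,b]$. Using $\phi^{-1}(\xi,\zeta)=(\varphi(\xi,-\zeta),\zeta)$ together with $\zeta=u$, the only candidate in the original coordinates is $x(t)=\varphi(\xi(t),-u(t))$, which is precisely the asserted representation.

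To prove existence, I would fix $t\in[a,b]$ and, using that $U$ is an impulse domain, produce a sequence $\{u_k\}\subset AC([a,b];U)$ with $u_k\to u$ in $L^1$ and $u_k(a)\to u(a)$, $u_k(t)\to u(t)$ (combining the approximations at the two prescribed points $a$ and $t$). Letting $x_k$ be the Carath\'eodory solution of \eqref{E}--\eqref{E0} for $(u_k,v)$ with data $(\xb,\ub)$, Lemma \ref{equivC1} represents it as $x_k(\cdot)=\varphi\big(\xi_k(\cdot),-z_k(\cdot)\big)$, where $z_k(\cdot)=u(a)+u_k(\cdot)-u_k(a)\to u$ in $L^1$ and $\xi_k$ solves \eqref{xiCO} with $u$ replaced by $z_k$ and the \emph{same} initial value $\xib$. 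The Lipschitz dependence of $\tilde F$ on $\xi$ and its continuity in the control argument, with $z_k\to u$ in $L^1$, give (by a Gronwall estimate after passing to an a.e.\ convergent subsequence of $z_k$) $\xi_k\to\xi$ uniformly on $[a,b]$. Then $x_k(t)=\varphi(\xi_k(t),-z_k(t))\to\varphi(\xi(t),-u(t))=x(t)$ by continuity of $\varphi$ and $z_k(t)\to u(t)$, while a dominated-convergence argument gives $x_k\to x$ in $L^1$; choosing $\{u_k\}$ with a suitable $L^1$-domination secures the uniform boundedness of $\{x_k\}$, so conditions (i)--(iii) hold and $x$ is a p.d.\ solution.

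For uniqueness, I would observe that \emph{any} p.d.\ solution $\Tilde x$ is, by definition, a pointwise limit $\Tilde x(t)=\lim_k x_k(t)$ along some admissible sequence $\{u_k\}$, so that the very same continuous-dependence argument forces $\xi_k\to\xi$ uniformly \emph{independently of the chosen approximating sequence}. Hence $x_k(t)\to\varphi(\xi(t),-u(t))$ for every $t$, which pins down $\Tilde x=x$ and yields uniqueness together with the representation.

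The hard part will be the continuous dependence of the Carath\'eodory solution $\xi_k$ under $L^1$ (rather than uniform) perturbations of the control-parameter $\zeta=u$ entering $\tilde F$, combined with controlling the back-transformation $\varphi(\xi_k,-z_k)$: since $u\in\L^1$ may be unbounded, verifying the uniform boundedness and the $L^1$-convergence demanded in (iii) — precisely on sets where $u$ is large — is where the choice of the approximating sequence must be made with care and the argument ceases to be routine.
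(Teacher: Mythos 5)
Your proposal is correct and follows essentially the same route as the paper: reduce via the diffeomorphism $\phi$ to the Carath\'eodory problem \eqref{xiCO} with $\zeta\equiv u$, obtain existence and the representation $x=\varphi(\xi,-u)$ by approximating with $AC$ controls and passing to the limit, and get uniqueness from the same continuous-dependence (Gronwall-type) estimate. The only cosmetic difference is that the paper phrases uniqueness as a contradiction argument using the Lipschitz estimate of Lemma \ref{cont-depAC} in the original coordinates, whereas you argue directly in the $(\xi,\zeta)$ coordinates that every admissible approximating sequence is forced to the same pointwise limit; these are the same estimate in different clothing.
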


To prove this theorem, which extends an analogous result in \cite{BreRam91} where $f$ did not depend on the standard control $v,$ we shall make use of the following result.

 \begin{lemma}[see \cite{AroRam13b}]
 \label{cont-depAC}
 The following assertions hold true:
\begin{itemize}
\item[(i)] For $r>0$ and $K\subseteq U$ compact, there exists a compact subset $K'\subset \cR^{n},$ such that the trajectories $x(\bar x,u,v)(\cdot)$ have values in $K',$ whenever we consider $\xb \in B_r(0),$ $u\in AC([a,b];K)$ and $v\in L^1([a,b];V).$
\item[(ii)] For each $r$ and $K$ as in  (ii), there exists a constant $M>0$ such that, for every $t \in[a,b],$ for all $\xb_1, \xb_2 \in B_r(0),$ for all $u_1,u_2 \in AC([a,b];K)$ and for every $v \in L^1([a,b];V),$ one has
\benl
\begin{split}
|x_1(t)-x_2(t)|& +   \|x_1-x_2\|_1 \leq\\
& M\Big[ |\xb_1-\xb_2|+ |u_1(a)-u_2(a)| \\
& + |u_1(t)-u_2(t)|+\|u_1-u_2\|_1
\Big].
\end{split}
\eenl
where $x_1 := x(\xb_1,u_1,v_1),$ $x_2 := x(\xb_2,u_2,v_2).$
\end{itemize}

\end{lemma}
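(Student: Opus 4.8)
The plan is to reduce everything to the straightened system \eqref{xiCO}, exploiting that for absolutely continuous $u$ the representation $x(\xb,u,v)(t)=\varphi(\xi(t),-u(t))$ is already available as a direct consequence of Lemma \ref{equivC1} and Lemma \ref{flowbox}, with no appeal to Theorem \ref{ex-un} (so the argument is not circular). Here $\xi(\cdot)=\xi(\xib,u,v)(\cdot)$ is the Carath\'eodory solution of \eqref{xiCO} with $\xib=\varphi(\xb,u(a))$, and the reduced field $F^i=\frac{\pr\varphi^i}{\pr x^j}\tilde f^j$ of Lemma \ref{flowbox} no longer contains $\dot u$. Throughout I take the two controls $v$ to coincide and be fixed, as in the statement: no Lipschitz dependence on $v$ is claimed, consistently with Hypothesis \ref{H1}(ii) where $f$ is only continuous in $v$, and the role of $v$ is confined to being an admissible time-dependent parameter in \eqref{xiCO}.

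For part (i), under Hypothesis \ref{H1} with (iv') in force so that $D\phi$ is globally bounded, the field $F$ inherits a sublinear bound $|F(\xi,u,v)|\le C(1+|\xi|)$ valid for all $u\in K$ and $v\in V$ (using the compactness of $K$ and $V$, the global Lipschitz character of $\phi^{-1}$, and the growth condition (iii)). Writing \eqref{xiCO} in integral form and applying Gronwall's inequality yields $|\xi(t)|\le R$ on $[a,b]$ for a radius $R=R(r,K,a,b)$ depending only on the data, since $|\xib|=|\varphi(\xb,u(a))|$ is bounded for $\xb\in B_r(0)$ and $u(a)\in K$. Because $x(t)=\varphi(\xi(t),-u(t))$ and $\varphi$ is continuous, the trajectory lies in the compact set $K':=\varphi\big(\overline{B_R(0)}\times K\big)$, which proves (i).

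For part (ii), let $\xi_i$ solve \eqref{xiCO} with data $\xib_i=\varphi(\xb_i,u_i(a))$ and control $(u_i,v)$, $i=1,2$. Subtracting the integral equations and inserting the mixed term $F(\xi_2,u_1,v)$ gives, for a.e. $s$, $|F(\xi_1,u_1,v)-F(\xi_2,u_2,v)|\le \ell\,|\xi_1-\xi_2|+\ell\,|u_1-u_2|$, where $\ell$ is a Lipschitz constant for $F$ in $(\xi,u)$ on the compact set produced in (i). Since $|\xib_1-\xib_2|\le L(|\xb_1-\xb_2|+|u_1(a)-u_2(a)|)$ by the Lipschitz continuity of $\varphi$, Gronwall's inequality delivers $|\xi_1(t)-\xi_2(t)|\le C\big(|\xb_1-\xb_2|+|u_1(a)-u_2(a)|+\|u_1-u_2\|_1\big)$ uniformly in $t$. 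Passing back through $x_i(t)=\varphi(\xi_i(t),-u_i(t))$ and using the Lipschitz continuity of $\varphi$ once more gives the pointwise bound $|x_1(t)-x_2(t)|\le M\big(|\xb_1-\xb_2|+|u_1(a)-u_2(a)|+|u_1(t)-u_2(t)|+\|u_1-u_2\|_1\big)$; integrating over $[a,b]$ and noting $\int_a^b|u_1(t)-u_2(t)|\,dt=\|u_1-u_2\|_1$ absorbs the pointwise term into the $L^1$ norm, so the $L^1$ bound on $x_1-x_2$ has the same form. Adding the two estimates yields the claim.

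The step I expect to be the main obstacle is securing the Lipschitz constant $\ell$ for $F$ \emph{uniformly in} $v\in V$ (equivalently, an $L^1$-in-time Lipschitz constant along the fixed control $v(\cdot)$). Hypothesis \ref{H1}(ii) asserts only local Lipschitz continuity of $f(\cdot,\cdot,v)$ for each fixed $v$ together with continuity in $v$; turning this into a single Lipschitz constant valid on $K'\times K$ for all $v\in V$ requires the compactness of $V$ and is the point where the regularity hypotheses must be used with care (in the generality of \cite{AroRam13b} one invokes the natural uniform-in-$v$ local Lipschitz bound, which is precisely what makes the Gronwall argument close). The remaining ingredients---Gronwall, and the Lipschitz continuity and boundedness of $\varphi$ and $D\varphi$ guaranteed by (iv')---are routine.
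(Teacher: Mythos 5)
Your argument is correct and is essentially the proof the paper has in mind: the paper states Lemma \ref{cont-depAC} without proof, deferring to \cite{AroRam13b}, and the argument there (as in \cite{BreRam91}) is exactly your reduction via the diffeomorphism $\phi$ to the straightened system \eqref{xiCO}, a Gronwall estimate in the $(\xi,\zeta)$ coordinates, and transport back through the locally Lipschitz map $\varphi$, with the pointwise $|u_1(t)-u_2(t)|$ term absorbed into $\|u_1-u_2\|_1$ upon integration. The caveat you flag --- that Hypothesis \ref{H1}(ii) must be read as providing a local Lipschitz constant for $f(\cdot,\cdot,v)$ uniform in $v\in V$ (and, via (iv'), enough regularity for $F$ to be Lipschitz on the compact set from part (i)) --- is the intended interpretation, so it is a point of hygiene rather than a gap; likewise your $K'=\varphi\big(\overline{B_R(0)}\times K\big)$ should read $\varphi\big(\overline{B_R(0)}\times(-K)\big)$, which changes nothing.
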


\begin{proof} {\it (of Theorem \ref{ex-un}) }
Set  $\ub:= u(a),$ $(\xib,\zetab) := \phi(\xb,\ub), $  $\zeta(\cdot):= \bar\zeta +u(\cdot)-\ub,$  and let $\xi$ be the solution of the differential equation \eqref{xieq} with initial condition $\xi(a) = \xib.$ Observe that $\zetab := \ub$ and hence, $\zeta \equiv u.$

Define $(x,z) := \phi^{-1}\circ(\xi,\zeta).$ Let us show that $(x,z)$ is a p.d. solution of \eqref{AS}. Choose $t \in [a,b]$ and a sequence of absolutely continuous controls $u_k: [a,b] \to U$ converging to $u$ in the $L^1$ topology and verifying $u_k(a) \to \ub,$ $u_k(t) \to u(t)$  when $k\to \infty.$
Since $u$ is bounded it is not restrictive to assume that the functions $\{ u_k \}$ have equibounded values.  Let $(\xi_k,\zeta_k)$ be the corresponding solutions to \eqref{TS} and set
\be
\label{xkdef}
(x_k,z_k):=  \phi^{-1}\circ(\xi_k,\zeta_k).
\ee
Note, in particular, that  the paths $(\xi_k,\zeta_k)$ and $(x_k,z_k)$ are equibounded.
Then
\benl
\begin{split}
\|(x,z)-&(x_k,z_k)\|_1 \\
&= \| \phi^{-1}\circ(\xi,\zeta) -
 \phi^{-1}\circ(\xi_k,\zeta_k)\|_1 \rightarrow_{k \to \infty} 0,
 \end{split}
 \eenl
as the map
 $\phi^{-1}$ is Lipschitz continuous on compact sets.
 Moreover, since $\zeta_k(t) = \zetab + u_k(t) - u_k(a),$ one has $\zeta_k(t) \to \zeta(t).$  Therefore, in view of \eqref{xkdef} and since $\xi_k \to \xi$ uniformly, $(x_k(t),z_k(t)) \to (x(t),z(t)).$
 This concludes the part concerning existence and representation of a solution.

In order to prove uniqueness, let $x^1(\cdot)$ and $x^2(\cdot)$ be solutions of \eqref{E}-\eqref{E0} both associated with the same data $\xb \in \cR^n,$ $(u,v) \in \L^1 \times L^1$ and where $\ub:=u(a).$  Assume by contradiction that there exists $t \in [a,b]$ such that
 $x^1(t)\neq x^2(t)$. According to the definition of p.d. solution there exist sequences  $\{ u^1_k\}_{k\in\mathbb{N}}$, $\{u^2_k\}_{k\in\mathbb{N}}$  in $AC ([a,b];U)$ such that, for $i=1,2$, one has
 \be\ba{c}
u^i_k(a) \to u(a),\quad u^i_k(t) \to u(t), \\
\|u^i_k - u\|_1\to 0,\quad \|x(\xb,u^i_k,v)-x^i\|_1\to 0.
 \ea
\eeq
Hence, by Lemma \ref{cont-depAC} above, we have,
\benl
\ba{c}
 |x^1_k(t)-x^2_k(t)| \leq M
\left( |u_k^1(a) - u_k^2(a)| \right.\\
\qquad\qquad\left.+ |u_k^1(t)-u_k^2(t)|+ \|u^1_k-u^2_k\|_1\right) \to 0.
\ea
\eenl
Therefore,
$
 |x^1(t)-x^2(t)|
 = \lim_{k\to\infty}
 |x^1_k(t)-x^2_k(t)|  = 0,
$
which is a contradiction. The proof is concluded.
\end{proof}
 
 \vspace{2pt}
 
Let us give below a toy example of a p.d. solution corresponding to a discontinuous $u$ with unbounded variation.
\begin{example}
Let us consider the differential equation
\be
\label{eqex}
\dot{x} = xv+ x\dot{u},\quad x(0)=\xb,
\ee
on the interval $[0,1],$ with
\benl
v(t):=
\left\{
\ba{cl}
1, & \text{for } t\in[0,1/2[,\\
0, & \text{for } t\in [1/2,1].
\ea
\right.
\eenl
Observe that, if $u \in AC([0,1];\cR),$ then, for any $[a,b]\subseteq [1/2,1],$ the associated Carath\'eodory solution of \eqref{eqex} verifies
\be
\label{forex}
x(t)=x(a)e^{u(t) - u(a)}.
\ee
Consider now the $\L^1-$control
\benl
u(t):=
\left\{
\ba{cl}
(-1)^{k+1}, & \text{for } t\in[1-\frac{1}{k},1-\frac{1}{k+1}[,\,\, k \in \cN,\\
0, & \text{for } t=1.
\ea
\right.
\eenl
On the subintervals of $[1/2,1]$ where $u(\cdot)$ is absolutely continuous, one may use \eqref{forex} to compute $x(\cdot).$ On the other hand, one can easily check that
\benl
\begin{split}
x(1-1/k _+) &= x(1-1/k_-)e^2, \quad \text{if $k$ is odd},\\
x(1-1/k _+) &= x(1-1/k_-)e^{-2}, \quad \text{if $k$ is even},
\end{split}
\eenl
where $x(1-1/k_-)$ and $x(1-1/k _+)$ denote the left and right limits of $x$ at $t=1-1/k,$ respectively.
Hence, the p.d. solution $x(\cdot)$ of \eqref{eqex} associated with $u(\cdot)$ is given, for any $t\in [0,1],$ by
\benl
x(t):=
\left\{
\ba{cl}
\xb e^t, & \text{for } t\in[0,\frac{1}{2}[,\\
\xb e^{1/2} e^{-2}, & \text{for } t \in \bigcup_{k=1}^{\infty} [1-\frac{1}{2k},1-\frac{1}{2k+1}[,\\
\xb e^{1/2}, & \text{for } t \in \bigcup_{k=1}^{\infty} [1-\frac{1}{2k+1},1-\frac{1}{2k+2}[,\\
\xb e^{-1/2}, & \text{for } t=1.
\ea
\right.
\eenl
Notice that both $u$ and $x$ have infinitely many discontinuities and  unbounded variation, and are everywhere pointwise defined.
\end{example}

 \begin{theorem}[Dependence on the data]
 \label{cont-depL1}
 The following assertions hold.
\begin{itemize}
\item[{(i)}] For each $\xb \in \cR^{n}$ and $u\in \L^1 ([a,b];U)$ the function $v(\cdot)\mapsto x(\xb,u,v)(\cdot)$ is continuous from $L^1([a,b];V)$ to $L^1([a,b];\cR^n).$
\item[(ii)] For $r>0$ and $K\subseteq U$ compact, there exists a compact subset $K'\subset \cR^{n},$ such that the trajectories $x(\bar x,u,v)(\cdot)$ have values in $K',$ whenever we consider $\xb \in B_r(0),$  $u\in \L^1([a,b];K),$ and $v\in L^1([a,b];V).$
\item[(iii)] For each $r$ and $K$ as in  (ii), there exists a constant $M>0$ such that, for every $t \in[a,b],$ for all $\xb_1, \xb_2 \in B_r(0),$ for all $u_1,u_2 \in \L^1([a,b];K)$  and for every $v \in L^1([a,b];V),$ one has
\benl
\begin{split}
|x_1(t)-&x_2(t)| +  \|x_1-x_2\|_1 \\
\leq &\, M\Big[ |\xb_1-\xb_2|+ |u_1(a) - u_2(a)| \\
& + |u_1(t)-u_2(t)|+\|u_1-u_2\|_1
\Big].
\end{split}
\eenl
where  $x_1 := x(\xb_1,u_1,v_1),$ $x_2 := x(\xb_2,u_2,v_2).$
\end{itemize}

\end{theorem}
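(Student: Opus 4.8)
The plan is to push everything through the representation formula of Theorem~\ref{ex-un}, namely $x(\xb,u,v)(t)=\varphi(\xi(t),-u(t))$, where $\xi=\xi(\xib,u,v)$ solves the genuine Carath\'eodory problem \eqref{xiCO} with $\xib=\varphi(\xb,u(a))$. The whole point of the change of coordinates $\phi$ is that, in the $\xi$-variable, the impulsive control $u$ no longer appears differentiated: it enters $\dot\xi=F(\xi,u,v)$ only as a \emph{bounded measurable parameter} (the second slot $\zeta\equiv u$), so the three assertions reduce to standard facts about a regular ODE, which are then transported back to $x$ through the map $\varphi$. The only inputs I need about $\varphi$ and $F$ are that $\varphi$ is locally Lipschitz (it is of class $\C^1$), that $F$ is continuous with the sublinear growth inherited from Hypothesis~\ref{H1}(iii) and condition (iv') via the global boundedness of $D\phi$, and that \eqref{xiCO} has a unique solution (our standing assumption).

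For part (ii) I would argue directly in the $\xi$-variable. Since $\xb\in B_r(0)$ and $u$ takes values in the compact set $K$, the datum $\xib=\varphi(\xb,u(a))$ lies in the compact set $\varphi(\overline{B_r(0)}\times K)$. Using that $\zeta\equiv u$ is confined to $K$ and that $|F(\xi,u,v)|\le C(1+|\xi|)$ uniformly for $v\in V$ (here the compactness of $V$ is essential), Gronwall's inequality confines $\xi(\cdot)$ to a compact set $K''$ depending only on $r$ and $K$. Then $x(\xb,u,v)(t)=\varphi(\xi(t),-u(t))$ lies in the compact set $K':=\varphi\big(K''\times(-K)\big)$, again depending only on $r$ and $K$, which is the claim.

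For part (iii) the cleanest route is to lift the already-established $AC$ estimate of Lemma~\ref{cont-depAC}(ii) to $\L^1$ controls by the approximation built into the definition of p.d.\ solution, exactly as in the uniqueness argument of Theorem~\ref{ex-un}. Fixing $t$, I would choose, for $i=1,2$, sequences $u_{i,k}\in AC([a,b];U)$ with $u_{i,k}\to u_i$ in $L^1$, $u_{i,k}(a)\to u_i(a)$, $u_{i,k}(t)\to u_i(t)$, and with $x(\xb_i,u_{i,k},v)\to x_i$ both at $t$ and in $L^1$; since the $u_i$ are bounded, the $u_{i,k}$ may be taken equibounded, valued in a fixed compact set depending only on $K$. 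Lemma~\ref{cont-depAC}(ii) then bounds $|x_{1,k}(t)-x_{2,k}(t)|+\|x_{1,k}-x_{2,k}\|_1$ by $M$ times the corresponding data differences, with one and the same constant $M=M(r,K)$. Passing to the limit $k\to\infty$, the left-hand side converges to $|x_1(t)-x_2(t)|+\|x_1-x_2\|_1$ and every term on the right converges to its $\L^1$ counterpart, which yields the estimate.

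Part (i), the continuity in $v$, is the genuinely new point and, I expect, the main obstacle, since it is \emph{not} contained in the data-difference estimates (those are stated for a single $v$). Fixing $\xb$ and $u$, so that $\xib$ is fixed, and taking $v_n\to v$ in $L^1$, I would let $\xi_n,\xi$ be the corresponding solutions of \eqref{xiCO}. By part (ii) they all lie in one compact set (uniformly in $v$, using $V$ compact), so $|\dot\xi_n|=|F|\le C$ and $\{\xi_n\}$ is equi-Lipschitz; by Arzel\`a--Ascoli every subsequence has a uniformly convergent sub-subsequence. Along a further subsequence with $v_n\to v$ a.e., continuity of $F$ together with dominated convergence lets me pass to the limit in $\xi_n(t)=\xib+\int_a^t F(\xi_n,u,v_n)\,ds$, so the limit solves \eqref{xiCO} with control $v$; uniqueness forces it to equal $\xi$, whence the whole sequence $\xi_n\to\xi$ uniformly. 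Transporting back through the locally Lipschitz $\varphi$ gives $x(\xb,u,v_n)\to x(\xb,u,v)$ uniformly, hence in $L^1$. The delicate steps to watch are that $L^1$-convergence of $v_n$ yields a.e.\ convergence only along subsequences — handled by the Arzel\`a--Ascoli-plus-uniqueness device — and, in part (iii), the need to keep the approximating controls inside a fixed compact set so that the constant in Lemma~\ref{cont-depAC} stays uniform.
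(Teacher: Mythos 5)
The paper does not actually prove Theorem~\ref{cont-depL1} --- it defers to the reference \cite{AroRam13b} --- so there is no in-paper argument to compare against; judged on its own, your proof is correct and uses exactly the machinery the paper sets up for this purpose: the representation $x=\varphi(\xi,-u)$ with $\xi$ solving the non-impulsive problem \eqref{xiCO} for parts (i)--(ii), and the lifting of the $AC$ estimate of Lemma~\ref{cont-depAC}(ii) through the approximating sequences built into the definition of p.d.\ solution, which is precisely the device the paper itself uses in the uniqueness part of Theorem~\ref{ex-un}. The only points a full write-up should make explicit are the two you already flag: that the approximating $AC$ controls can be kept in a fixed compact subset of $U$ so that the constant of Lemma~\ref{cont-depAC} is uniform (the paper asserts this restriction is harmless in the proof of Theorem~\ref{ex-un}), and that the local Lipschitz constant of $F$ in $\xi$ can be taken uniform over $v\in V$ --- a mild strengthening of the separate continuity in Hypothesis~\ref{H1}(ii) --- so that the dominated-convergence step in part (i) closes.
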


 A detailed proof of this result is available in \cite{AroRam13b}.

 \section{Proper extension of a standard minimum problem}\label{SecProper}

Let us consider  the (standard)  optimal control problem
 \bel{ottimoCed}
 \inf_{(u,v)\in AC\times L^1}\psi(x(b), u(b))\,,
\ee
where it is assumed that:
 \begin{itemize}
  \item[(i)] the cost map $\psi: \cR^{n+m}\to\cR $ is continuous;
  \item[(ii)] $ AC\times L^1$  stands for  $ AC([a,b ];U) \times L^1 ([a,b ];V);$  \item[(iii)] $x(\cdot)=x(\bar x,u,v)(\cdot),$ i.e. $x(\cdot)$ is the p.d. solution of the Cauchy problem \eqref{E}-\eqref{E0} where $\ub := u(a).$
 \end{itemize}

Our main concern here is to define  a {\it proper extension} of the minimum problem \eqref{ottimoCed}.

Let us give a formal notion of {\it proper extension}:

\begin{definition} Let $E$ be a set and let $\F:E\to\cR$ be a function. A  {\em proper extension} of a minimum problem
 \bel{original} \inf_{e\in E} \F(e)\eeq
 is  a new minimum  problem
   \bel{new} \inf_{{\hat e}\in {\hat E}} {\hat \F}({\hat e})\eeq
on a set $\hat E$ endowed with a limit notion and     such that there exists an injective map $i: E\to {\hat E}$
   verifying the following properties:
   \begin{itemize}
     \item[(i)] $\hat \F(i(e)) = \F(e)$ for all $e\in E$ and, moreover, for every ${\hat e}\in {\hat E}$ there exists a sequence $(e_k)$ in $E$ such that, setting ${\hat e}_k:= i(e_k)$, one has
         \bel{limite}
         \lim_{k \to \infty} \big(\hat e_k,\hat \F(\hat e_k) \big) = ({\hat e},\hat\F(
         \hat  e)),
         \ee
        \item[(ii)]  $\ds \inf_{e\in E} \F(e) =  \inf_{{\hat e}\in {\hat E} }{\hat \F}({\hat e}).$ \end{itemize}
     \end{definition}

After identifying $E$ and $\hat E$ with the set of pairs $(x(\cdot),u(\cdot))$ corresponding to controls in $AC \times L^1$ and  $\L^1 \times L^1,$ respectively,
we wish to investigate the question whether the  optimal control problem \bel{ottimoCed}
 \inf_{(u,v)\in \L^1\times L^1}\psi(x(b), u(b))\,,
\ee is a proper extension (with $i$ equal to the identity map) of the problem
\bel{ottimoC}
\inf_{(u,v)\in AC\times L^1}\psi(x(b), u(b)).
\ee

\begin{remark}
\label{rem-dense}
Notice that, in view of the definition of p.d. solution, the density property {(i)} is automatically satisfied.
\end{remark}

      To investigate the validity of  (ii), let us consider the reachable sets (at time $b$ for a fixed initial values $\xb$ and $\ub$):
\bel{reach1}
\begin{split}
\R:=  \{ (x,u)(b) :\,& (u,v) \in \L^1 \times L^1,\\
& u(a)=\ub,\ x = x(\bar x, u,v)\},
\end{split}
\ee
\bel{reach2}
\begin{split}
\R^+ := \{ (x,u)(b): \,& (u,v) \in AC \times L^1,\\
& u(a)=\ub,\  x = x(\bar x, u,v)\}.
\end{split}
\ee
Since the (Carath\'eodory) solution corresponding to an absolutely continuous $u$ is also a p.d. solution, one has
\bel{incl}
\R^+\subset \R.
\eeq
The inclusion is in general strict. However, the closure of the two sets always coincide.
\begin{theorem}\label{reach-th1}
\bel{reach-eq1}
\overline{{\R}} = \overline{{\R^+}}.
\ee
\end{theorem}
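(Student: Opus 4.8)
The plan is to prove $\overline{\R} = \overline{\R^+}$ by showing each inclusion. One direction is immediate: since $\R^+ \subset \R$ by \eqref{incl}, we have $\overline{\R^+} \subset \overline{\R}$. The substance of the theorem is therefore the reverse inclusion $\overline{\R} \subset \overline{\R^+}$, and since $\overline{\R^+}$ is closed, it suffices to establish that $\R \subset \overline{\R^+}$. In other words, I would fix an arbitrary point $(x,u)(b) \in \R$, realized by some pair $(u,v) \in \L^1 \times L^1$ with $u(a) = \ub$ and $x = x(\bar x, u, v)$, and produce a sequence of points in $\R^+$ converging to it.

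The key idea is to exploit the very definition of p.d. solution, which builds the integrable-control trajectory as a limit of absolutely continuous ones. Concretely, applying the definition of p.d. solution at the endpoint $t = b$, there exists a sequence $\{u_k\} \subset AC([a,b];U)$ with $u_k \to u$ in $L^1$, $u_k(a) \to u(a) = \ub$, and $u_k(b) \to u(b)$, together with associated Carath\'eodory solutions $x_k = x(\bar x, u_k, v)$ satisfying $x_k(b) \to x(b)$. Each pair $(x_k, u_k)(b)$ lies in $\R^+$, essentially by definition of $\R^+$ in \eqref{reach2} — the only subtlety is that the definition of p.d. solution starts the approximating controls from the common initial value $\ub = u(a)$, but one may need to slightly adjust so that $u_k(a) = \ub$ exactly rather than merely $u_k(a) \to \ub$. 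Since $\{x_k\}$ is equibounded and $(x_k, u_k)(b) \to (x, u)(b)$ in $\cR^{n+m}$, the limit point lies in $\overline{\R^+}$, giving $\R \subset \overline{\R^+}$ and hence the claimed equality.

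The main obstacle I anticipate is the reconciliation between $u_k(a) \to \ub$ and the requirement $u_k(a) = \ub$ in the definition of $\R^+$. If the approximating sequence only satisfies $u_k(a) \to \ub$, the points $(x_k, u_k)(b)$ need not literally belong to $\R^+$ as defined with the fixed initial condition $\ub$. To handle this, I would invoke the continuous dependence estimate of Lemma \ref{cont-depAC}(ii): replacing $u_k$ by a nearby $\tilde u_k \in AC([a,b];U)$ with $\tilde u_k(a) = \ub$ exactly and $\|\tilde u_k - u_k\|_1 \to 0$, $\tilde u_k(b) - u_k(b) \to 0$ (which is possible since $U$ is an impulse domain and $u_k(a) \to \ub$), the estimate controls $|x(\bar x, \tilde u_k, v)(b) - x_k(b)|$ in terms of $|\ub - u_k(a)| + |\tilde u_k(b) - u_k(b)| + \|\tilde u_k - u_k\|_1 \to 0$. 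Thus $(x(\bar x, \tilde u_k, v), \tilde u_k)(b) \in \R^+$ still converges to $(x,u)(b)$, closing the argument. A secondary point to verify is that the $v$ component can be kept fixed throughout — this is legitimate because the notion of p.d. solution approximates only in the impulsive control $u$ while holding $v$ constant, so no modification of $v$ is required.
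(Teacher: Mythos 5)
Your proof is correct and follows essentially the same route as the paper's: both hinge on the fact that, by the very definition of p.d. solution, every endpoint in $\R$ is a limit of endpoints in $\R^+$ (the paper merely packages this as a proof by contradiction with an $\eta/3$ triangle-inequality argument instead of your direct verification that $\R\subseteq\overline{\R^+}$). Your explicit reconciliation of $u_k(a)\to\ub$ with the requirement $u_k(a)=\ub$ in the definition of $\R^+$, via the impulse-domain property and the continuous-dependence estimate of Lemma \ref{cont-depAC}(ii), addresses a subtlety that the paper's proof silently glosses over, so that step is a refinement rather than a deviation.
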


\begin{proof}
In view of \eqref{incl} it suffices to prove that $\overline{{\R}}  \subseteq \overline{{\R^+}}$. Assume by contradiction that there exists $y\in \overline{{\R}}$ such that
\bel{contr1}
d\Big(y,{\R^+}\Big)=\eta>0,\eeq
and let $\{(u_k,v_k)\} \subset \L^1([a,b];U)\times L^1([a,b];V)$ be a sequence of controls with $u_k(a)=\ub$  and such that the final points $y_k:= \Big(x(\bar x,u_k, v_k)(b)\,,\,u_k(b) \Big)$ verify
$d(y_k, y)\leq  \eta/3,$ for all $k\in \cN.$
Because of the definition of p.d. solution, for every $k\in\cN$ there exists  $(\hat u_k, v_k)\in AC([a,b ];U)\times L^1([a,b ];V)$ such that, setting $\hat y_k:= \Big(x(\bar x, \hat u_k,v_k)(b),\uh_k(b)\Big)$, one has $ d(\hat y_k ,y_k)\leq \eta/3,$
  so that
  $$
  d(y\, ,\, \hat y_k) \leq d(y\, ,\,  y_k) +  d(y_k , \hat y_k) \leq 2\eta/3,
  $$
  which contradicts \eqref{contr1}, as $\hat y_k\in {\R^+}.$
\end{proof}

Let us define the {\it value functions}
$$\begin{array}{c}
\vspace{2pt} \ds V_{AC}(\xb,\ub) :=  \inf_{(u,v)\in AC\times L^1}\psi(x(b), u(b))  \left(  = \inf_{\R^+} \psi(x,u)  \right),\\
\ds V_{\L^1} (\xb,\ub)  :=\inf_{(u,v)\in \L^1\times L^1}\psi(x(b), u(b)) \left(  = \inf_{\R} \psi(x,u) \right),
\end{array}$$
where it has been made explicit that these values depend on the initial data $(\xb,\ub).$

 \begin{corollary}
 \label{infug-th}
 For every  $(\bar x,\ub) \in \cR^{n+m},$ one has
\bel{infug}
V_{AC}(\bar x,\ub) =  V_{\L^1}(\bar x,\ub).
 \ee
  \end{corollary}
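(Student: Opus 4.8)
The plan is to reduce the equality of the two value functions to the equality of reachable-set closures already established in Theorem \ref{reach-th1}. The single observation that drives everything is that, since $\psi$ is continuous, the infimum of $\psi$ over any set coincides with the infimum over its closure; once this is in hand, the corollary is purely formal.

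First I would dispose of the easy inequality. From the inclusion $\R^+\subseteq\R$ recorded in \eqref{incl}, taking the infimum of $\psi$ over the larger set $\R$ can only decrease the value, so $V_{\L^1}(\xb,\ub)=\inf_{\R}\psi\le\inf_{\R^+}\psi=V_{AC}(\xb,\ub)$. This direction needs nothing beyond the set inclusion and requires none of the density machinery.

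For the reverse inequality I would prove the elementary fact that for any subset $S\subseteq\cR^{n+m}$ one has $\inf_S\psi=\inf_{\overline S}\psi$. The inclusion $S\subseteq\overline S$ gives $\inf_{\overline S}\psi\le\inf_S\psi$ at once. Conversely, given any $y\in\overline S$, choose a sequence $y_k\in S$ with $y_k\to y$; continuity of $\psi$ yields $\psi(y_k)\to\psi(y)$, and since $\inf_S\psi\le\psi(y_k)$ for every $k$, passing to the limit gives $\inf_S\psi\le\psi(y)$. As $y\in\overline S$ was arbitrary, $\inf_S\psi\le\inf_{\overline S}\psi$, whence equality. Applying this to $S=\R$ and to $S=\R^+$, and then invoking Theorem \ref{reach-th1} to replace $\overline{\R}$ by $\overline{\R^+}$, I would obtain
\[
V_{\L^1}=\inf_{\R}\psi=\inf_{\overline{\R}}\psi=\inf_{\overline{\R^+}}\psi=\inf_{\R^+}\psi=V_{AC},
\]
which is exactly \eqref{infug}.

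I do not expect a genuine obstacle at this stage: the entire difficulty has already been absorbed into the density argument of Theorem \ref{reach-th1}, which itself rests on the definition of p.d. solution. The only point to treat with a little care is the claim $\inf_S\psi=\inf_{\overline S}\psi$, and even there the global continuity of $\psi$ on $\cR^{n+m}$ makes the verification routine. Thus the corollary is essentially a continuity-plus-closure restatement of the reachable-set result.
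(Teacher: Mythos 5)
Your argument is correct and is exactly the intended one: the paper states this as an immediate corollary of Theorem \ref{reach-th1}, relying on the continuity of $\psi$ and the elementary fact that the infimum of a continuous function over a set equals its infimum over the closure. Nothing is missing.
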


 Hence, also in view of Remark \ref{rem-dense}, we can conclude that problem \eqref{ottimoCed} {\em is a proper extension of   problem \eqref{ottimoC}}.

\section{Limits of minimun problems \\ with bounded variation}\label{SecBV}

Let us assume that $U$ is a convex set.

When the impulsive (possibly discontinuous) control $u$ has bounded total variation one can give a notion of solution based on the concept of  {\it graph completion} (see e.g. \cite{BreRam88}, \cite{Mil89}, \cite{SilVin96}).  This approach differs from the one above and can, in fact, be applied also to systems with no commutativity assumptions. However, if the commutativity hypothesis is standing, one can establish a one-to-one correspondence between the two concepts, as shown in Proposition \ref{ident} below.

For every $K\geq 0$, let us consider  the original system, supplemented with the variable $x_0=t,$
\bel{or+1}
\left\{
\ba{l}
\dot x_0 = 1,\\
\dot{x} = \tilde f(x,u,v)+ \sum_{\alpha=1}^m {\tilde g}_\alpha(x,u) \dot u^\alpha,\\
(x_0,x,u)(a) = (a,\bar x,\ub),
\ea
\right.
\ee
where the impulsive controls $u$  belong to the set
$$
BV_K([a,b];U) := \Big\{u: [a,b] \to U,\, {\rm Var}[u]\leq K\Big\},
$$
where ${\rm Var}[u]$ denotes the {\em total variation of $u.$} We also consider the subset
$$
AC_K([a,b];U):= AC([a,b];U)\cap BV_K([a,b];U).
$$
\if made of absolutely continuous maps $u(\cdot)$ with values in $U$ and such that $\ds \int_a^b |\dot u(t)|dt \leq K$.
\fi

\begin{definition}
We shall use $\U_K$ to denote the set  of maps $ (\ubf_0,\ubf)\in Lip( [0,1];[a,b]\times U)$ \footnote{\,Here $Lip( [0,1];[a,b]\times U)$ denotes the space of Lipschitz continuous function defined in $[0,1]$ and with values in $[a,b]\times U.$} such that, for a.a. $s\in [0,1]$, $\ubf_0'(s)\geq 0$, $\ubf_0'(s)+ |\ubf'(s)| \leq b-a+K,$  and, moreover, $ \ubf_0([0,1]) = [a,b]$.  These maps will be called {\em space-time controls with  variation not larger than $K$.}
 Furthermore,   $\U_K^+\subset \U_K$ will denote  the subset made of those space-time controls $(\ubf_0,\ubf)$ such that $\ubf_0'>0$ for a.a. $s\in [0,1]$.
 \end{definition}

 Let us consider the {\it space-time control system} in the interval $[0,1]$ given by
\bel{spacetime}
\left\{
\begin{array}{l}
\ybf_0' = \ubf_0',\\
 \ds\frac{d\ybf}{ds}= \ubf_0'\tilde f(\ybf,{\bf u},{\bf v})+ \sum_{\alpha=1}^m {\tilde g}_\alpha(\ybf,\ubf) {\ubf^\alpha}' , \\
(\ybf_0,\ybf,\ubf)(0) = (a,\bar x,\ub)\,,
\end{array}\right.
\ee
where the apex denotes differentiation with respect to the {\it pseudo-time} $s$,  $(\ubf_0,\ubf)\in \U_K$, and $\vbf \in L^1([0,1];V)$.
If   $\ubf$ is absolutely continuous,  \eqref{spacetime} can be regarded as an {\em ad hoc}  Lipschitz continuous  time-reparameterization of \eqref{or+1}, as it is made precise in the following statement (whose proof merely relies on the chain rule for derivatives).

\begin{proposition}
\label{ident}
Let us consider controls  $(u,v)\in AC_K([a,b];U)\times L^1([a,b];V)$  and an initial data $\bar x\in \cR^n.$  Let us set
$$
s(t) := \frac{\int_a^t (1 +|\dot u|) d\tau}{\int_a^b (1 +|\dot u|) d\tau}, \quad t(\cdot)=\ybf_0(\cdot) := s^{-1}(\cdot),
$$
and
$\ubf_0(s):= t(s)$, $\ubf(s) =  u\circ t(s)$, ${\bf v}(s):=  v\circ t(s).$
Then, $(\ubf_0,\ubf)\in \U_K^+$, $v\in L^1([0,1];V)$ and,  setting $x(\cdot):= x(\bar x, u,v)(\cdot) $, $\ybf(\cdot) =\ybf(\bar x, \ubf,{\bf v})(\cdot)$, one has
\bel{repar}
x\circ t(s)  = \ybf(s), \quad \forall s\in [0,1].
\ee
Conversely, if  $(\ubf_0,\ubf)\in \U_K^+$ , ${\bf v}\in L^1([0,1]; V)$,  setting
$
s(\cdot) := \ubf_0^{-1}(\cdot) $
and
$u(t) =  \ubf\circ s(t)$, $ v(t):=  {\bf v}\circ s(t)$,
one has that $(u,v) \in AC_K ([a,b];U)\times L^1([a,b];V)$ and
$$
x(t) = \ybf\circ s(t),  \qquad \forall t\in [a,b],
$$
where $x(\cdot):= x(\bar x, u,v)(\cdot) $, $\ybf(\cdot) =\ybf(\bar x, \ubf,{\bf v})(\cdot)$.
\end{proposition}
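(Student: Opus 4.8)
The plan is to prove Proposition \ref{ident} by a direct change of the independent variable, exploiting that the proposition, as stated, explicitly claims merely a \emph{reparameterization}; the chain rule does all the real work. First I would verify the two structural claims: that the constructed $(\ubf_0,\ubf)$ lands in $\U_K^+$ (resp.\ that the constructed $(u,v)$ lands in $AC_K\times L^1$), and only then the trajectory identity \eqref{repar}. I would treat the two directions separately but symmetrically, since each is the inverse of the other.

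For the forward direction, I would begin by observing that the total arclength denominator $\int_a^b(1+|\dot u|)\,d\tau = (b-a)+\mathrm{Var}[u] \le (b-a)+K$ is a finite positive constant, so $s(\cdot)$ is a well-defined, strictly increasing, absolutely continuous function from $[a,b]$ onto $[0,1]$ with $s(a)=0$, $s(b)=1$; its inverse $t(\cdot)=\ybf_0(\cdot)$ is therefore Lipschitz. I would then compute $\ubf_0'(s)=t'(s)=1/s'(t(s))=\big((b-a)+\mathrm{Var}[u]\big)/(1+|\dot u(t(s))|)$ a.e., which is $>0$, and check the budget constraint: since $\ubf'(s)=\dot u(t(s))\,t'(s)$, one gets $\ubf_0'(s)+|\ubf'(s)| = t'(s)\big(1+|\dot u(t(s))|\big) = (b-a)+\mathrm{Var}[u] \le b-a+K$, giving membership in $\U_K^+$. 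The identity \eqref{repar} then follows by differentiating $x\circ t(s)$ in $s$: the chain rule turns $\dot x\,t'$ into $t'\,\tilde f + \tilde g_\alpha\,\dot u^\alpha\,t' = \ubf_0'\tilde f(\ybf,\ubf,\vbf)+\tilde g_\alpha(\ybf,\ubf)\,{\ubf^\alpha}'$, which is exactly the $\ybf$-equation in \eqref{spacetime}, and since both sides share the initial value $\bar x$, uniqueness of the Carath\'eodory solution of \eqref{spacetime} forces $x\circ t(s)=\ybf(s)$. The converse direction reverses these steps: from $(\ubf_0,\ubf)\in\U_K^+$ I would set $s(\cdot)=\ubf_0^{-1}$, noting $\ubf_0'>0$ a.e.\ makes $\ubf_0$ strictly increasing and its inverse absolutely continuous, then recover $u=\ubf\circ s$ and check $\mathrm{Var}[u]\le K$ from the budget inequality $\ubf_0'+|\ubf'|\le b-a+K$ integrated over $[0,1]$.

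The main obstacle, and the place where I would be most careful, is the measure-theoretic regularity of the inverse reparameterization in the converse direction. Strict positivity of $\ubf_0'$ \emph{almost everywhere} does not by itself guarantee that $s=\ubf_0^{-1}$ is absolutely continuous, nor that $u=\ubf\circ s$ is itself absolutely continuous and of the correct variation; one genuinely needs that $\ubf_0$ maps null sets to null sets and that the a.e.\ chain rule applies to the composition, which requires a Luzin-type (N) argument or an appeal to the fact that $\ubf_0$ is Lipschitz and strictly monotone so $s=\ubf_0^{-1}$ is locally Lipschitz where $\ubf_0'$ is bounded below. Since the proposition permits $\ubf_0'$ to vanish on a null set in $\U_K^+$ only in the limiting sense, I would state precisely that for $(\ubf_0,\ubf)\in\U_K^+$ the inverse is absolutely continuous and that the composition formula $(\ubf\circ s)'=\ubf'(s(t))\,s'(t)$ holds a.e., invoking the standard change-of-variables theorem for monotone absolutely continuous maps. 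Once this regularity is secured, the trajectory identity in the converse direction is again immediate from the chain rule and uniqueness of solutions, exactly mirroring the forward computation, and the proof closes.
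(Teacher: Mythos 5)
Your proof is correct and follows exactly the route the paper indicates: the paper supplies no details beyond remarking that the proposition's ``proof merely relies on the chain rule for derivatives,'' and your chain-rule computation together with the verification of the budget identity $\ubf_0'(s)+|\ubf'(s)|=(b-a)+\mathrm{Var}[u]\le b-a+K$ is precisely that argument, fleshed out. Your extra care about the absolute continuity of $s=\ubf_0^{-1}$ in the converse direction is well placed (the precise point is that $\ubf_0$, being absolutely continuous with $\ubf_0'>0$ a.e., cannot map a set of positive measure onto a null set, which is what gives Luzin's condition (N) for the inverse), and it addresses a step the paper silently elides.
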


\vspace{1pt}

On the other hand, the space-time control system makes sense also when we allow $\ubf_0'(s)=0$  on some interval $[s_1,s_2]\subseteq[0,1]$. This accounts for a trajectory's { jump }  at  $t=\ubf_0(s_1) \big(=\ubf_0(s_2) \big).$ Notice that the trajectory `during' the jump is governed by the dynamics  $\sum_{\alpha=1}^m {\tilde g}_\alpha(\ybf,\ubf) {\ubf^\alpha}'.$ The commutativity hypothesis is here crucial, for it implies that the magnitude $\ybf(s_2)-\ybf(s_1)$ of the jump is independent of the path  $[s_1,s_2]\to\ubf(s).$

Consider now the {\em reachable sets} (at time $b$):
\bel{reach1K}
\R_K := \{ (x,u)(b) : (u,v)\in {BV}_K\times L^1\},
\ee
\bel{reach2K+}
\R_K^+ := \{ (x,u)(b) : (u,v)\in {AC}_K\times L^1\},
\ee
\bel{reach3}
\R_{K}^{BV} := \{ (\ybf(1),\ubf(1)) : \left((\ubf_0,\ubf),{\bf v}\right)\in \U_K\times L^1\},
\ee
\bel{reach4}
\R_{K}^{BV+} :=  \{ (\ybf(1),\ubf(1)) : \left((\ubf_0,\ubf),{\bf v}\right)\in \U_K^+\times L^1\},
\ee
where it is meant that the involved trajectories are the solutions of the corresponding Cauchy problems with given initial point $(\bar x,\ub).$

It follows easily that
\bel{reachinc}\begin{array}{c}
\qquad \R_{K}^{BV+}\subset \R_{K}^{BV}\,\,\forall K>0, \\\,\\
0 \leq K_1< K_2 \,\Rightarrow\,\R_{K_1}^{BV+} \subset \R_{K_2}^{BV+},\, \R_{K_1}^{BV} \subset \R_{K_2}^{BV}.
\end{array}
\ee
Moreover, in view of Proposition \ref{ident}, absolutely continuous solutions of \eqref{or+1} coincide with solutions of \eqref{spacetime} corresponding to $\U_K^+$, up to reparameterization. In particular,
\bel{reacheq}
\R^+_K  = \R_{K}^{BV+},\quad \text{for all } K\geq 0.
\ee
Furthermore,
\be
\R_K = \R_K^{BV}.
\ee
This identity can be verified by exploiting the commutativity assumption (Hypothesis \ref{AssComm}), which makes all the graph completions equivalent, and then by associating to each control $u \in BV$ its {\em rectilinear graph completion.} The latter is a Lipschitz continuous path in space-time obtained by bridging the discontinuities of $u$ by means of rectilinear segments.

One can also prove  (see \cite{MotRam96}) the following statement.

 \begin{proposition}\label{st-th1}
 For  every solution $\ybf(\bar x, \ubf_0,\ubf, {\bf v})$ corresponding to a control $(\ubf_0,\ubf)\in \U_K$ there exists a sequence $ \{({\ubf_0}_h,{\ubf}_h)\}_{h\in\cN}$ in $\U_K^+$ such that
 $$
 ({\ubf_0}_h,{\ubf}_h)\to (\ubf_0,\ubf), \quad \ybf(\bar x, {\ubf_0}_h,{\ubf}_h,  {\bf v}) \to  \ybf(\bar x, \ubf_0,\ubf,  {\bf v}),
 $$
 uniformly on $[0,1].$ In particular, one gets,
\bel{reachcleq1}
 \overline{\R_{K}^{BV+}} = \overline{\R_{K}^{BV}}.
\ee
\end{proposition}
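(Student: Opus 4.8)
The plan is to exhibit the approximating sequence through a single explicit convex combination and then to close with a Gronwall-type continuous dependence argument for the space-time system \eqref{spacetime}; the convexity of $U$, standing throughout this section, is what makes the construction legitimate. Fix $(\ubf_0,\ubf)\in\U_K$ and a sequence $\lambda_h\downarrow0$ (say $\lambda_h=1/h$). Since $\ubf_0'\ge0$ and $\ubf_0([0,1])=[a,b]$ force $\ubf_0(0)=a,\ \ubf_0(1)=b$, I would set
\[
\ubf_{0,h}(s):=(1-\lambda_h)\,\ubf_0(s)+\lambda_h\big(a+(b-a)s\big),\qquad
\ubf_h(s):=(1-\lambda_h)\,\ubf(s)+\lambda_h\,\ub .
\]
Convexity of $U$ gives $\ubf_h(s)\in U$, and $\ubf_h(0)=\ub$ preserves the initial condition. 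First I would verify membership in $\U_K^+$: differentiating, $\ubf_{0,h}'=(1-\lambda_h)\ubf_0'+\lambda_h(b-a)\ge\lambda_h(b-a)>0$ a.e., so $\ubf_{0,h}$ is strictly increasing from $a$ to $b$ and $\ubf_{0,h}([0,1])=[a,b]$, while with $\ubf_h'=(1-\lambda_h)\ubf'$ one has
\[
\ubf_{0,h}'+|\ubf_h'|=(1-\lambda_h)\big(\ubf_0'+|\ubf'|\big)+\lambda_h(b-a)\le(1-\lambda_h)(b-a+K)+\lambda_h(b-a)=(b-a+K)-\lambda_h K .
\]
Thus the speed constraint is not merely preserved but carries a slack $\lambda_h K$. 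This computation is the crux of the construction: a naive tilt of $\ubf_0$ alone (keeping $\ubf$ fixed) would overshoot the bound by $\lambda_h(b-a)$, and it is precisely the simultaneous shrinking of $\ubf$ toward the constant $\ub$ that restores the inequality.

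Convergence of the controls is then immediate. From $\ubf_{0,h}-\ubf_0=\lambda_h\big(a+(b-a)s-\ubf_0\big)$ and $\ubf_h-\ubf=\lambda_h(\ub-\ubf)$ one gets $\|\ubf_{0,h}-\ubf_0\|_\infty\le\lambda_h(b-a)$ and $\|\ubf_h-\ubf\|_\infty=\lambda_h\|\ub-\ubf\|_\infty$, so $(\ubf_{0,h},\ubf_h)\to(\ubf_0,\ubf)$ uniformly. Moreover $\ubf_{0,h}'-\ubf_0'=\lambda_h\big((b-a)-\ubf_0'\big)$ and $\ubf_h'-\ubf'=-\lambda_h\ubf'$ are bounded in modulus by $\lambda_h(2(b-a)+K)$, so the derivatives converge in $L^1$ (in fact in $L^\infty$).

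Next I would prove the trajectory convergence $\ybf_h:=\ybf(\bar x,\ubf_{0,h},\ubf_h,\vbf)\to\ybf:=\ybf(\bar x,\ubf_0,\ubf,\vbf)$ uniformly. All the controls take values in $[a,b]\times K_U$ with $K_U:=\mathrm{conv}\big(\ubf([0,1])\cup\{\ub\}\big)\subset U$ compact, and their derivatives are bounded by $b-a+K$; hence by the growth and regularity hypotheses on $\tilde f,\tilde g_\alpha$ (Hypothesis~\ref{H1}) the family $\{\ybf_h\}$ is equibounded in a common compact $K'$. Writing \eqref{spacetime} in integral form for $\ybf_h-\ybf$ and splitting each difference into a part Lipschitz in $(\ybf,\ubf)$ (with the uniformly bounded coefficients $\ubf_{0,h}',\ubf_h^{\alpha\prime}$) plus a part involving the difference of the derivative coefficients, one obtains, using that $\tilde f,\tilde g_\alpha$ are Lipschitz on $K'\times K_U$ and bounded there, a Gronwall estimate of the form
\[
\sup_{s\in[0,1]}|\ybf_h(s)-\ybf(s)|\le C\Big(\|\ubf_{0,h}'-\ubf_0'\|_1+\sum_\alpha\|\ubf_h^{\alpha\prime}-\ubf^{\alpha\prime}\|_1+\|\ubf_h-\ubf\|_\infty\Big)\longrightarrow 0 .
\]

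Finally I would deduce \eqref{reachcleq1}. The inclusion $\overline{\R_{K}^{BV+}}\subseteq\overline{\R_{K}^{BV}}$ is \eqref{reachinc}. Conversely, any $(\ybf(1),\ubf(1))\in\R_{K}^{BV}$ arising from $(\ubf_0,\ubf)\in\U_K$ is the limit of $(\ybf_h(1),\ubf_h(1))\in\R_{K}^{BV+}$ by the above, so $\R_{K}^{BV}\subseteq\overline{\R_{K}^{BV+}}$ and hence $\overline{\R_{K}^{BV}}\subseteq\overline{\R_{K}^{BV+}}$, which closes the argument. The only genuinely delicate point is the constraint verification of the first paragraph; once the coupled perturbation and its slack $\lambda_h K$ are identified, the remaining continuous dependence is routine, being in fact easier than usual since here the perturbed derivatives converge uniformly rather than only in $L^1$.
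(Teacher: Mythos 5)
Your proof is correct, but note that the paper does not actually prove Proposition \ref{st-th1}: it simply defers to the reference [MotRam96], so any self-contained argument is necessarily ``different from the paper's.'' Your construction is a clean one, and you correctly put your finger on the only delicate point: tilting $\ubf_0$ alone by $\lambda_h\big(a+(b-a)s\big)$ would violate the speed constraint $\ubf_0'+|\ubf'|\le b-a+K$, and the coupled contraction of $\ubf$ toward the constant $\ub$ (legitimate precisely because $U$ is assumed convex in this section) restores it with slack $\lambda_h K$; the verification $\ubf_{0,h}'\ge\lambda_h(b-a)>0$, the preservation of $\ubf_h(0)=\ub$ and of the range $\ubf_{0,h}([0,1])=[a,b]$, and the uniform (indeed $L^\infty$) convergence of the derivatives are all checked correctly. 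The concluding Gronwall step is standard and works because $\ubf_0'$ and $|\ubf'|$ are a.e.\ bounded by $b-a+K$, so all controls and trajectories live in fixed compacta; the only implicit assumption you share with the paper's own Lemma \ref{cont-depAC} is that the local Lipschitz constant of $f(\cdot,\cdot,v)$ can be taken uniform over $v\in V$, which Hypothesis \ref{H1}(ii) states only pointwise in $v$ but which the paper clearly intends. The approach usually found in the literature (and in [MotRam96]) instead perturbs only the time component and then renormalizes the parameterization of $[0,1]$, which avoids touching $\ubf$ and hence does not need convexity of $U$; your convex-combination variant is more elementary but does use that standing assumption, which is available here. The derivation of \eqref{reachcleq1} from the approximation statement together with \eqref{reachinc} is exactly as it should be.
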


\begin{remark}\label{st-th2}
(see \cite{BreRam88})
If the vector field $\tilde f$ is independent of the ordinary control $v$, then the set of solutions to \eqref{spacetime} corresponding to controls in $\U_K$ is closed in the $\C^0-$topology. In particular,
the reachable set $\R_{K}^{BV}$ is compact, so that
$$
\overline{\R_{K}^{BV}}= \R_{K}^{BV}.
$$
\end{remark}

\begin{remark}
Let us point out that relations \eqref{reachinc}, \eqref{reacheq}, Proposition \ref{st-th1} and Remark \ref{st-th2} are valid also in the case when the commutativity in Hypothesis \ref{AssComm} is not imposed.
\end{remark}

 \begin{theorem}
\label{reach-th}
\bel{reach}
\overline\R =\overline{ \bigcup_{K \geq 0}  \R_{K}^{BV}} = \overline{ \bigcup_{K \geq 0}  \R_{K}^{BV+}}.
\ee
\end{theorem}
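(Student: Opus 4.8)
The plan is to reduce everything to the already-established identity $\overline{\R} = \overline{\R^+}$ of Theorem \ref{reach-th1}, together with the level-$K$ identities relating the $\L^1$-reachable sets to the space-time reachable sets. Write $A := \R$, $B := \bigcup_{K \geq 0} \R_K^{BV}$ and $C := \bigcup_{K \geq 0} \R_K^{BV+}$; the goal is $\overline{A} = \overline{B} = \overline{C}$, and I would split this into the two equalities $\overline{C} = \overline{A}$ and $\overline{C} = \overline{B}$.

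For $\overline{C} = \overline{A}$, the first observation is that every $u \in AC([a,b];U)$ has finite total variation $\int_a^b |\dot u|\, d\tau$, so $AC = \bigcup_{K \geq 0} AC_K$ and consequently $\R^+ = \bigcup_{K \geq 0} \R_K^+$. Combining this with the reparameterization identity \eqref{reacheq}, namely $\R_K^+ = \R_K^{BV+}$ for every $K \geq 0$, gives $C = \bigcup_{K \geq 0} \R_K^{BV+} = \bigcup_{K \geq 0} \R_K^+ = \R^+$. Taking closures and invoking Theorem \ref{reach-th1} then yields $\overline{C} = \overline{\R^+} = \overline{\R} = \overline{A}$.

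For $\overline{C} = \overline{B}$, one inclusion is immediate from \eqref{reachinc}: since $\R_K^{BV+} \subset \R_K^{BV}$ for each $K$, we have $C \subseteq B$ and hence $\overline{C} \subseteq \overline{B}$. For the reverse inclusion I would use Proposition \ref{st-th1} at each fixed level $K$: relation \eqref{reachcleq1} gives $\R_K^{BV} \subseteq \overline{\R_K^{BV}} = \overline{\R_K^{BV+}} \subseteq \overline{C}$, where the last inclusion holds because $\R_K^{BV+} \subseteq C$. Taking the union over $K \geq 0$ shows $B \subseteq \overline{C}$, and passing to closures gives $\overline{B} \subseteq \overline{C}$. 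Combining the two inclusions proves $\overline{B} = \overline{C}$, which together with the previous paragraph completes the proof.

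The argument is essentially a bookkeeping chain of inclusions and closures, so there is no genuine analytical obstacle remaining once the prior results are in hand; the only point requiring a moment of care is the elementary fact that absolutely continuous functions on $[a,b]$ have bounded variation, which is what lets $\R^+$ exhaust itself as $\bigcup_{K \geq 0} \R_K^+$ and thereby connects the finite-$K$ machinery (Proposition \ref{st-th1} and identity \eqref{reacheq}) to the unconstrained $\L^1$ problem. The substantive content has already been discharged in Theorem \ref{reach-th1} and Proposition \ref{st-th1}, and this theorem merely glues them together.
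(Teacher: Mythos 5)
Your argument is correct, and it uses precisely the ingredients the paper has already set up: the elementary fact that $AC=\bigcup_{K\geq 0}AC_K$ (so that $\bigcup_{K\geq 0}\R_K^{BV+}=\bigcup_{K\geq 0}\R_K^+=\R^+$ via \eqref{reacheq}), Theorem \ref{reach-th1} to pass from $\overline{\R^+}$ to $\overline{\R}$, and \eqref{reachinc} together with \eqref{reachcleq1} to identify $\overline{\bigcup_K\R_K^{BV}}$ with $\overline{\bigcup_K\R_K^{BV+}}$. The paper itself does not prove this theorem here but defers to an external reference, so no line-by-line comparison is possible; your chain of inclusions is, however, a complete and sound derivation from the stated prior results, and I see no gap.
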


We refer to \cite{AroRam13b} for a proof of the latter result.

Let us to consider the value functions corresponding to problems with bounded variation:
\benl
\ba{c}
V_{AC_K} (\xb,\ub)= \ds\inf_{(u,v)\in AC_K\times\L^1}\psi(x(b), u(b)),\\
V_{BV_K^+}(\xb,\ub)=\ds\inf_{(\ubf_0,\ubf,{\bf v})\in \U_K^+\times\L^1}\psi(\ybf(1), \ubf(1)),\\
V_{BV_K} (\xb,\ub) =\ds \inf_{(\ubf_0,\ubf,{\bf v})\in \U_K\times\L^1}\psi(\ybf(1), \ubf(1)).
\ea
\eenl

\begin{corollary}\label{inftyed-th}
For every $(\xb,\ub) \in \cR^n \times U,$ one has
\bel{infty}
 \lim_{K\to \infty} V_{BV_K} (\xb,\ub) = V(\xb,\ub).
\ee

\end{corollary}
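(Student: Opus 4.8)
The plan is to reinterpret both sides as infima of the continuous cost $\psi$ over reachable sets and then to invoke Theorem \ref{reach-th} together with the continuity of $\psi$. First I would record the two identities
$$V_{BV_K}(\xb,\ub)=\inf_{\R_K^{BV}}\psi,\qquad V(\xb,\ub)=V_{\L^1}(\xb,\ub)=\inf_{\R}\psi,$$
which follow directly from the definitions of the value functions and of the reachable sets \eqref{reach1} and \eqref{reach3}. By the nesting property in \eqref{reachinc} the family $\{\R_K^{BV}\}$ is increasing in $K$, hence $K\mapsto V_{BV_K}(\xb,\ub)$ is non-increasing; in particular the limit as $K\to\infty$ exists in $[-\infty,+\infty)$ and equals $\inf_{K\geq 0}V_{BV_K}(\xb,\ub)$, so no separate convergence argument is needed.

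Next I would exchange the two infima. Since $V_{BV_K}(\xb,\ub)=\inf_{\R_K^{BV}}\psi$, one has
$$\lim_{K\to\infty}V_{BV_K}(\xb,\ub)=\inf_{K\geq 0}\inf_{\R_K^{BV}}\psi=\inf_{\bigcup_{K\geq0}\R_K^{BV}}\psi.$$
It then remains to identify this last infimum with $\inf_{\R}\psi$. Here I would use the elementary fact that, for a continuous $\psi$, the infimum over any set coincides with the infimum over its closure: given a point $p$ in the closure and a sequence $p_h\to p$ in the set, continuity yields $\psi(p_h)\to\psi(p)$, so passing to the closure does not change the infimum. Applying this to both $\bigcup_{K\geq0}\R_K^{BV}$ and $\R$ gives
$$\inf_{\bigcup_{K\geq0}\R_K^{BV}}\psi=\inf_{\overline{\bigcup_{K\geq0}\R_K^{BV}}}\psi,\qquad \inf_{\R}\psi=\inf_{\overline{\R}}\psi.$$
By Theorem \ref{reach-th} the two closures coincide, $\overline{\R}=\overline{\bigcup_{K\geq0}\R_K^{BV}}$, so the two infima are equal and $\lim_{K\to\infty}V_{BV_K}(\xb,\ub)=\inf_{\R}\psi=V(\xb,\ub)$.

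The entire substance of the argument is carried by Theorem \ref{reach-th}: once the approximation result $\overline{\R}=\overline{\bigcup_{K\geq0}\R_K^{BV}}$ is granted, the corollary reduces to the monotone-limit manipulation above and to the trivial stability of infima of continuous functions under closure. Thus the genuinely delicate point — that trajectories generated by truly impulsive ($\L^1$, possibly unbounded-variation) controls can be approximated in the final-point topology by trajectories generated by controls of bounded variation — has already been isolated and settled in Theorem \ref{reach-th}, and I would expect that to be the only real obstacle behind this statement.
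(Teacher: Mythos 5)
Your proof is correct and follows exactly the route the paper intends: the corollary is a direct consequence of Theorem \ref{reach-th} once both value functions are rewritten as infima of the continuous cost $\psi$ over the corresponding reachable sets, the limit in $K$ is identified with the infimum over the increasing union, and one uses that the infimum of a continuous function over a set equals its infimum over the closure. This mirrors the way Corollary \ref{infug-th} is deduced from Theorem \ref{reach-th1} in Section \ref{SecProper}, so nothing further is needed.
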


\section{Considerations on dynamic programming}\label{SecLast}

For every $K \geq 0,$ let us consider the map $W_K: [a,b]\times \cR^M\times U\times [0,K]$ defined by letting $W_K(t,x,u,k)$ be the value function of the (impulsive)  minimum problem in $[t,b]$ with $u$-variation less than or equal to $K-k.$
By a reparameterization approach akin to the one in \cite{MotRam96}  one might prove that $W$
 is continuous and is the unique solution of a boundary value problem for a Hamilton-Jacobi equation involving the {\em compactified}  Hamiltonian
 \benl
 \ba{l}
 H(t,x,u,k,p_t,p_x,p_u,p_k) \\
  \ \ :=\sup_{w_0+ |w|\leq 1, v\in V} {\mathcal H}(t,x,u,k,p_t,p_x,p_u,p_k; w_0,w,v),
 \ea
  \eenl
 where the ${\mathcal H}$ is defined by
 $$
 \begin{array}{l}
 {\mathcal H}(t,x,u,k,p_t,p_x,p_u,p_k; w_0,w,v):=\\
 (p_t + p_x\cdot f(x,u,v)) w_0 + ( p_x \cdot g_\alpha + p_{u_\alpha}) w_\alpha + p_k|w_\alpha|.\end{array}$$

 Notice that $W_K(a,x,u,0)=V_{BV_K}(x,u)$, for all $(x,u)\in \cR^n\times{U}$. In particular  the Hamilton-Jacobi  equation
 $$
 H(t,x,u,k,\nabla W_K) = 0
 $$
 may be utilized  for both {sufficient conditions} of optimality and  { numerical analysis} of the problem with ${\rm Var} (u)\leq K.$ Via Corollary \ref{inftyed-th}, one can then address the general problem.

 \if{
 \subsection{Regularity of vector fields} Finally, let us comment the issue of the regularity of the data. The commutativity assumption $[g_\alpha, g_\beta]=0$ and the Simultaneous Flow-Box Theorem, which allow to define the coordinate change $\phi,$  have a classical interpretation for smooth vector fields. However, a notion of Lie bracket for locally Lipschitz continuous vector fields and a connected  Simultaneous Flow Box Theorem have been recently provided (see e.g.
 \cite{RamSus06}). Thus, one may conjecture that a notion of p.d. solution can be given also for the case when the $g_\alpha$'s are locally Lipschitz. Of course, one should somehow address the question raised by the non differentiability of the  transformation $\phi.$
 It is expectable that some notion of generalized differential has to be utilized. Accordingly, the vector field  $F$ would be  multivalued.
  }\fi

\bibliographystyle{plain}
\bibliography{impulsive}

\begin{thebibliography}{10}

\bibitem{AroRam13c}
M.S. Aronna and F.~Rampazzo.
\newblock Density issues for impulsive controls with final constraints.
\newblock 2013.
\newblock [in preparation].

\bibitem{AroRam13a}
M.S. Aronna and F.~Rampazzo.
\newblock ${L}^1$ trajectories for control-affine systems.
\newblock 2013.
\newblock [Submitted].

\bibitem{AroRam13b}
M.S. Aronna and F.~Rampazzo.
\newblock A note on systems with ordinary and impulsive controls.
\newblock 2013.
\newblock [Submitted].

\bibitem{Bre87}
A.~Bressan.
\newblock On differential systems with impulsive controls.
\newblock {\em Rend. Sem. Mat. Univ. Padova}, 78:227--235, 1987.

\bibitem{BreRam88}
A.~Bressan and F.~Rampazzo.
\newblock On differential systems with vector-valued impulsive controls.
\newblock {\em Boll. Un. Mat. Ital. B (7)}, 2(3):641--656, 1988.

\bibitem{BreRam91}
A.~Bressan and F.~Rampazzo.
\newblock Impulsive control systems with commutative vector fields.
\newblock {\em J. Optim. Theory Appl.}, 71(1):67--83, 1991.

\bibitem{Dyk94}
V.A. Dykhta.
\newblock The variational maximum principle and quadratic conditions for the
  optimality of impulse and singular processes.
\newblock {\em Sibirsk. Mat. Zh.}, 35(1):70--82, ii, 1994.

\bibitem{Haj85}
O.~H\'ajec.
\newblock Book review.
\newblock {\em Bull. Amer. Math. Soc.}, 12(2):272--279, 1985.

\bibitem{Lang}
S.~Lang.
\newblock {\em Differential and {R}iemannian manifolds}, volume 160 of {\em
  Graduate Texts in Mathematics}.
\newblock Springer-Verlag, New York, third edition, 1995.

\bibitem{Mil89}
B.M. Miller.
\newblock Optimization of dynamical systems with generalized control.
\newblock {\em Avtomat. i Telemekh.}, (6):23--34, 1989.

\bibitem{MotRam96}
M.~Motta and F.~Rampazzo.
\newblock Dynamic programming for nonlinear systems driven by ordinary and
  impulsive controls.
\newblock {\em SIAM J. Control Optim.}, 34(1):199--225, 1996.

\bibitem{SilVin96}
G.N. Silva and R.B. Vinter.
\newblock Measure driven differential inclusions.
\newblock {\em J. Math. Anal. Appl.}, 202(3):727--746, 1996.

\end{thebibliography}
\end{document}